\numberwithin{equation}{section}
\def\newaliasedtheorem#1[#2]#3{
  \newaliascnt{#1@alt}{#2}
  \newtheorem{#1}[#1@alt]{#3}
  \expandafter\newcommand\csname #1@altname\endcsname{#3}
}
\def\avint{\mathop{\mathchoice{\,\rlap{-}\!\!\int}
                              {\rlap{\raise.15em{\scriptstyle -}}\kern-.2em\int}
                              {\rlap{\raise.09em{\scriptscriptstyle -}}\!\int}
                              {\rlap{-}\!\int}}\nolimits}
\def\avint{\mathop{\,\rlap{-}\!\!\int}\nolimits}
\newcommand{\A}{\mathcal{A}}
\DeclareMathOperator{\id}{id_2}
\newcommand{\R}{\mathbb{R}}
\newcommand{\E}{\mathds{E}}
\newcommand{\eps}{\varepsilon}
\DeclareMathOperator{\cof}{cof}
\DeclareMathOperator{\rank}{rank}
\DeclareMathOperator{\dv}{div}
\DeclareMathOperator{\loc}{loc}
\DeclareMathOperator{\idd}{id}
\DeclareMathOperator{\Lip}{Lip}
\theoremstyle{plain}
\newtheorem{Teo}{Theorem}[section]
\newtheorem{lemma}[Teo]{Lemma}
\newtheorem{prop}[Teo]{Proposition}
\theoremstyle{definition}
\newtheorem{Def}[Teo]{Definition}
\theoremstyle{remark}
\newtheorem{rem}[Teo]{Remark}
\newtheorem{Rem}[Teo]{Remarks}
\title[Lawson-Osserman Conjecture]{On the Lawson-Osserman conjecture}
\author[J. Hirsch]{Jonas Hirsch}
\address{Jonas Hirsch
\hfill\break Universit\"at Leipzig, Mathematisches Institut, Augustusplatz 10, 04109 Leipzig, Germany}
\email{hirsch.jonas@math.uni-leipzig.de}
\author[C. Mooney]{Connor Mooney}
\address{Connor Mooney
\hfill\break Department of Mathematics, UC Irvine}
\email{mooneycr@math.uci.edu}
\author[R. Tione]{Riccardo Tione}
\address{Riccardo Tione
\hfill\break Max Planck Institute for Mathematics in the Sciences, Inselstrasse 22, 04103 Leipzig, Germany}
\email{riccardo.tione@mis.mpg.de}
\begin{document}

\maketitle

\begin{abstract}
We prove that if $u : B_1 \subset \mathbb{R}^2 \rightarrow \mathbb{R}^n$ is a Lipschitz critical point of the area functional with respect to outer variations, then $u$ is smooth. This solves a conjecture of Lawson and Osserman from 1977 in the planar case.
\end{abstract}
\par
\medskip\noindent
\textbf{Keywords:} {Minimal surface system, weak solutions, regularity.}
\par
\medskip\noindent
{\sc MSC (2020): 35B65, 49Q05, 53A10, 58E12.
\par
}

\section{Introduction}

The $m$-dimensional area of the graph $\{(x,\,u(x)): x \in B_1\} \subset \mathbb{R}^{m+n}$ of a Lipschitz map $u : B_1 \subset \mathbb{R}^m \rightarrow \mathbb{R}^n$ is given by
\begin{equation}\label{AreaFunctional}
\E(u) \doteq \int_{B_1}  \sqrt{\det g}\,dx,
\end{equation}
where
$$g = \text{id}_m + Du^TDu$$
is the metric induced by the non-parametric map $x \mapsto (x,\,u(x))$. If $u$ is a critical point of $\E$ with respect to outer variations, that is,
\begin{equation}\label{OuterCrit}
\partial_i(\sqrt{\det g}g^{ij}\partial_ju^{l}) = 0,\, \quad l = 1, \dots , n,
\end{equation}
we say that $u$ is a Lipschitz weak solution of the minimal surface system, or that $u$ is outer critical. If $u$ is a critical point with respect to inner variations, that is,
\begin{equation}\label{InnerCrit}
\partial_i(\sqrt{\det g}g^{ij}) = 0, \quad j = 1, \dots , m,
\end{equation}
we say that $u$ is inner critical. Finally, we say that $u$ is stationary if it is both outer and inner critical. 
\\
\\
In the case that $u \in C^2$, it is easy to show that outer critical and stationary are equivalent. In \cite[Conjecture 2.1]{LOA}, Lawson and Osserman conjectured that the same is true when $u$ is just Lipschitz. In the codimension one case $n = 1$ this follows from the De Giorgi-Nash-Moser theorem \cite{DEG,NASH,NASHPRE,MOS}. The purpose of this paper is to confirm the Lawson-Osserman conjecture when $m = 2$ and the codimension $n$ is arbitrary:

\begin{Teo}\label{main}
Lipschitz weak solutions to the minimal surface system in dimension $m = 2$ are smooth.
\end{Teo}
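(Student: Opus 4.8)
The plan is to exploit the conformal structure that is special to dimension $m=2$. The starting point is the elementary fact that $\det\!\big(\sqrt{\det g}\,g^{-1}\big)=(\sqrt{\det g})^{m}(\det g)^{-1}$, which equals $1$ precisely because $m=2$. Setting $a\doteq\sqrt{\det g}\,g^{-1}$ (so $a^{ij}=\sqrt{\det g}\,g^{ij}$), this says that $a$ is a symmetric, positive definite matrix field with $\det a\equiv 1$; since $u$ is Lipschitz the eigenvalues of $g=\operatorname{id}_2+Du^{T}Du$ lie in $[1,1+\|Du\|_{\infty}^{2}]$, so $a$ is uniformly elliptic with ellipticity controlled by $\|Du\|_{\infty}$, i.e. $a$ encodes a bounded measurable conformal structure on $B_1$. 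The outer-criticality equation \eqref{OuterCrit} then reads: each scalar component $u^{l}$ is a weak solution of $\operatorname{div}(a\,\nabla u^{l})=0$, with \emph{the same} coefficient field $a$ for all $l$ (the coupling between the components is entirely through the single field $a$). This is the two-dimensional avatar of the conformal invariance of the minimal surface system: $\sqrt{\det g}\,g^{ij}$ depends only on the conformal class of $g$ when $m=2$.

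Next I would introduce isothermal coordinates adapted to $a$. By the measurable Riemann mapping theorem there is a quasiconformal homeomorphism $\Phi\colon B_1\to\Omega$, with dilatation bounded in terms of $\|Du\|_{\infty}$ — hence $\Phi,\Phi^{-1}\in W^{1,p}_{\mathrm{loc}}$ for some $p>2$ by Astala's area distortion theorem — which flattens the conformal structure of $g$, in the sense that $v\in W^{1,2}_{\mathrm{loc}}(B_1)$ solves $\operatorname{div}(a\,\nabla v)=0$ if and only if $v\circ\Phi^{-1}$ is weakly (hence, by Weyl, classically) harmonic on $\Omega$. Applying this componentwise, $h\doteq u\circ\Phi^{-1}\colon\Omega\to\mathbb{R}^{n}$ is a harmonic map, in particular real analytic, and $\widetilde F\doteq(\Phi^{-1},h)\colon\Omega\to\mathbb{R}^{2+n}$ is a weakly conformal parametrization of the graph of $u$, i.e. its pulled-back metric satisfies $D\widetilde F^{T}D\widetilde F=\lambda\,\operatorname{id}_2$ a.e. for some $\lambda>0$. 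Justifying this reduction for merely Lipschitz weak solutions — that the quasiconformal change of variables maps weak solutions to weak solutions and produces a genuinely weakly harmonic $h$ — is routine once $\Phi^{\pm1}\in W^{1,2}_{\mathrm{loc}}$.

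The heart of the matter is to upgrade $G\doteq\Phi^{-1}$ from quasiconformal to harmonic. For a conformal parametrization $\Delta\widetilde F$ is pointwise normal to the surface; since the last $n$ components of $\widetilde F$ are harmonic, $\Delta\widetilde F$ takes values in $\mathbb{R}^{2}\times\{0\}$; but the graph of $u$ is a graph over $B_1$, so any vector normal to it and lying in $\mathbb{R}^{2}\times\{0\}$ must vanish. Hence $\Delta\widetilde F=0$, so $G$ is harmonic. Analytically this is the identity that, with $\zeta\doteq G^{1}+iG^{2}$ and $\Psi\doteq\sum_{l}(h^{l}_{\xi})^{2}$ (holomorphic because the $h^{l}$ are harmonic), one has $\zeta_{\xi}\,\overline{\zeta_{\bar\xi}}=-\Psi$; differentiating this in $\bar\xi$ and using that $\zeta$ is quasiconformal — so $\zeta_{\bar\xi}=\mu\,\zeta_{\xi}$ with $\|\mu\|_{\infty}\le k<1$ and $\zeta_{\xi}\ne0$ a.e. — forces $\zeta_{\xi\bar\xi}=0$. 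Once $G$ is a harmonic homeomorphism, Lewy's theorem gives that its Jacobian never vanishes, so $G$ is a real analytic diffeomorphism of $\Omega$ onto $B_1$; therefore $\Phi=G^{-1}$ is real analytic and $u=h\circ\Phi$ is real analytic on $B_1$. In particular $u$ is smooth, which proves Theorem~\ref{main} and — since a $C^{2}$ outer-critical map is stationary — confirms the conjecture of Lawson and Osserman in the plane.

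The step I expect to be the genuine obstacle is making this last argument rigorous: a priori $\widetilde F$ (equivalently $G$) lies only in $W^{1,p}_{\mathrm{loc}}$ with $p>2$, which does not license differentiating the conformality relations. One must therefore first bootstrap the regularity of the quasiconformal coordinate map, using that its complex dilatation is tied to the smooth holomorphic function $\Psi$ through $|\zeta_{\bar\xi}|^{2}\,|\zeta_{\xi}|^{2}=|\Psi|^{2}$, until $\widetilde F$ has enough derivatives for the computation above; only then does Lewy's theorem, followed by the standard Schauder bootstrap applied to \eqref{OuterCrit} (whose coefficients $\sqrt{\det g}\,g^{ij}$ become Hölder the moment $Du$ is continuous), upgrade $u$ to $C^{\infty}$. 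All of this is genuinely two-dimensional: both $\det a\equiv1$ and the conformal invariance fail for $m\ge3$, consistent with the existence of a Lipschitz, non-smooth weak solution — the Lawson–Osserman cone — when $m=4$.
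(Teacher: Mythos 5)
Your route — pass to the weakly conformal parametrization $\widetilde F=(\Phi^{-1},h)$, observe that $\Delta\widetilde F$ is normal, lies in $\mathbb{R}^2\times\{0\}$, and therefore vanishes because the surface is a graph, hence $\Phi^{-1}$ is harmonic and Lewy's theorem finishes — is genuinely different from the paper's, and more streamlined: it bypasses entirely the paper's case split on the rank of $Dv$ and the Monge--Amp\`ere connection $B(Dw|D\psi)=\cof(D^2z)$. What both proofs share is the initial factorization $u=h\circ\Phi$ (the paper's Lemma~\ref{lem:qc}, which you rightly attribute to the measurable Riemann mapping theorem), and the idea of transferring the problem to a \emph{quasiconformal map satisfying an inner-variation equation}. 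You reduce to the Dirichlet energy: the identity $\zeta_\xi\overline{\zeta_{\bar\xi}}=-\Psi$ with $\Psi$ holomorphic says precisely that $\zeta=\Phi^{-1}$ has holomorphic Hopf differential, i.e.\ is inner critical for the Dirichlet energy. The paper instead reduces to the \emph{area} inner-variation equation for the inverse $w=f^{-1}$ of a $2\times 2$ block of $u$, and proves its own regularity theorem (Theorem~\ref{teo:regin}) for that setting.

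The gap is the regularity upgrade you flag as the ``genuine obstacle,'' and the bootstrap you propose would not work. Knowing that the product $|\zeta_\xi|^2|\zeta_{\bar\xi}|^2=|\Psi|^2$ is smooth tells you nothing about the factors separately; equivalently, the complex dilatation $\mu=-\overline\Psi/|\zeta_\xi|^2$ involves the \emph{unknown} $|\zeta_\xi|^2$, so the Beltrami equation $\zeta_{\bar\xi}=\mu\zeta_\xi$ has a priori only measurable coefficients, and rewriting it as $\zeta_{\bar\xi}\overline{\zeta_\xi}=-\overline\Psi$ just returns you to the conformality relation you started from. Likewise the pointwise computation $\zeta_{\xi\bar\xi}\overline{\zeta_{\bar\xi}}=-\zeta_\xi\overline{\zeta_{\xi\bar\xi}}$, from which you conclude $|\zeta_{\bar\xi}|=|\zeta_\xi|$ unless $\zeta_{\xi\bar\xi}=0$, is only legitimate once $\zeta\in C^2$, which is exactly what is at stake. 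The missing ingredient is a theorem, not a bootstrap: for a quasiconformal $\zeta$ with holomorphic Hopf differential $\zeta_\xi\overline{\zeta_{\bar\xi}}$, smoothness is precisely H\'elein's result for quasiconformal maps that are inner-critical for the Dirichlet energy (cited in the paper's remark after Theorem~\ref{teo:regin} as a precursor to that theorem, alongside Iwaniec--Kovalev--Onninen). If you invoke that result, your argument closes and in fact yields real analyticity directly via Lewy and the holomorphicity of $h_\xi$; but note that H\'elein's theorem is itself a hard regularity statement of the same depth as the paper's Theorem~\ref{teo:regin}, so the conceptual load has been relocated rather than removed. Two smaller remarks: (a) the zero set of $\Psi$, if $\Psi\not\equiv0$, is discrete and must be excised and removed at the end, exactly like the paper's $E_1$; and (b) once $G=\Phi^{-1}$ is real analytic and a diffeomorphism, $u=h\circ G^{-1}$ is already real analytic, so the Schauder bootstrap in your final sentence is unnecessary.
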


\begin{rem}
One may draw an analogy between Theorem \ref{main} and the theory of harmonic maps. Indeed, deep work of H\'{e}lein \cite{HELBO} and Rivi\`{e}re \cite{RIV} shows that weakly harmonic maps (outer critical for the Dirichlet energy) are smooth when the domain is two-dimensional. In contrast, when the domain has dimension $m \geq 3$, there exist weakly harmonic maps that are \emph{wild}, e.g. nowhere continuous \cite{RIVAC}. In view of these results it is tempting to conjecture the existence of wild (e.g. nowhere $C^1$) Lipschitz outer critical maps for the area functional in higher dimension. However, the methods used to treat harmonic maps are quite different, and it is not clear what to expect for the area functional when $m \geq 3$ and $n \geq 2$.
\end{rem}

A major advantage of working with stationary maps is that the monotonicity formula is available, which reduces the analysis of possible singularities to the study of stationary cones (one-homogeneous stationary maps). It is known for example that stationary maps are smooth in dimension $m \leq 3$, see \cite{DFC,BAR}, and more generally that the Hausdorff dimension of the singular set is at most $m - 4$ \cite{DIMLER}. The latter result is sharp in view of the Lawson-Osserman cone constructed in \cite[Theorem 7.1]{LOA}. In contrast, much less is known about outer critical maps. 

\begin{rem}
The same questions can be asked for functionals associated to general strictly polyconvex integrands $F$, of which the area integrand $\sqrt{\det g}$ is an important example. For such polyconvex integrands $F$, Lipschitz outer critical maps can be nowhere $C^1$, even in dimension $m = n = 2$, \cite{LSP} (see \cite{SMVS} for related examples in the quasiconvex setting). The key to such constructions is the existence of certain matrix configurations related to $F$ known as $T_N$ configurations, and a technique known as convex integration. It turns out that these configurations are ruled out for maps that are stationary with respect to such functionals, \cite{DLDPKT,HRT}.
\end{rem}

\begin{rem}
Lipschitz inner critical maps for the area functional can be nowhere $C^1$, even in dimension $m = n = 2$, \cite{DUSE}.
\end{rem}

Our approach to Theorem \ref{main} combines ideas from complex analysis, the analysis of quasiconformal inner critical maps, the regularity theory for the Monge-Amp\`{e}re equation, and the codimension one case. Here we describe our approach in the case $m = n = 2$, for simplicity. In the first step we show using isothermal coordinates that $u$ can be written
$$u = v \circ \phi,$$
where $\phi$ is a quasiconformal map and $v$ is harmonic (this works in general codimension). The set $\{|Dv| = 0\}$ consists of isolated points. Using removability results for isolated points we may assume that $|Dv| \sim 1$. Near a point $x$ where $|\det Dv(\phi(x))| > 0$ we have that $u$ is a quasiconformal map (up to a rigid motion). The key in this situation is the observation that the inverse $w$ of $u$ is an inner critical quasiconformal map near $u(x)$. Using a connection between inner critical maps and the Monge-Amp\`{e}re equation we show that inner criticality is an elliptic condition on quasiconformal matrices, and as a result we establish the smoothness of $w$ near $u(x)$, thus the smoothness of $u$ near $x$. The alternative is that $|\det Dv(\phi(x))| = 0$. We can then establish using the quasiconformality of $\phi$ that $|\det Du|$ is small near $x$. Using that the area functional is convex near rank-one matrices, we can then prove the smoothness of $u$ near $x$ in the latter case as well.
\\
\\
In the general codimension case, the argument is the same at points where $Dv(\phi(x))$ has rank one (that is, we leverage the convexity of the area functional in this region). Near points where $Dv$ has rank two, the key observation is that the image of $u$ is contained in a smooth two-dimensional surface in $\mathbb{R}^n$. This observation makes the analysis in general codimension similar to the case $n = 2$.

\begin{rem}
Each step in the proof of Theorem \ref{main} uses the two-dimensionality of the domain in a crucial way, and it is not clear what to expect in higher dimensions. A natural question is whether there exist $T_N$ configurations in the differential inclusions associated to outer variations in higher dimensions, which would be evidence towards the existence of wild solutions.
\end{rem}

The paper is organized as follows. In Section \ref{PRELNOT} we discuss preliminaries related to the area functional and quasiconformal mappings. In Section \ref{s:MTS} we reduce the proof of Theorem \ref{main} to justifying the use of isothermal coordinates and passing from outer variations for $u$ to inner variations of its inverse, the regularity of outer critical maps with small Jacobian determinant, and the regularity of quasiconformal inner critical maps. Sections \ref{s:OUTTO}-\ref{s:SMALLDET}-\ref{s:SOLIN} are devoted to the proofs of these results, respectively. Finally, the proofs of some technical lemmas are included in the Appendix.

\section*{Acknowledgments}
C. Mooney was supported by a Sloan Research Fellowship and NSF CAREER Grant DMS-2143668. He is grateful to Richard Schoen for bringing this problem to his attention.

\section{Preliminaries and notation}\label{PRELNOT}
\subsection{Notation}
We denote by $X^T$ the transpose of $X \in \R^{n \times m}$. If $n = m$ and $X$ is invertible, we denote by $X^{-T}$ the inverse of $X^T$. The standard Euclidean product on vectors $a,b \in \R^n$ is denoted by $(a,b)$, while the standard Euclidean product on matrices $A,B \in \R^{n\times m}$ is denoted by $\langle A,B \rangle$. The associated Euclidean norm is denoted both for vectors $a$ and matrices $A$ as $|a|$ and $|A|$.
\subsection{The area functional}
Here we discuss the structure of the area functional. We restrict our attention to the case that the domain has dimension $m = 2$, since that is the relevant case for our main result.
\\
\\
As recalled in the introduction, the stationarity condition with respect to the area functional for a map $u \in \Lip(B_1,\R^n)$ reads as:
\begin{equation}\label{sys0}
\begin{cases}
\partial_i(\sqrt{\det g}g^{ij}\partial_ju^{l}) = 0,& \quad l = 1,\dots, n,\\
\partial_i(\sqrt{\det g}g^{ij}) = 0,& \quad j = 1, \dots, 2,
\end{cases}
\end{equation}
where $(g^{ij}) = (g_{ij})^{-1}$, $g_{ij} =\delta_{ij} + (D u_i,D u_j)$. Let
\[
\E(u) \doteq \int_{B_1}\A(Du)dx, \quad u \in \Lip(B_1,\R^n),
\]
be the energy associated to the area integrand, see \eqref{eq:expA} below. The first line of \eqref{sys0} corresponds to assuming criticality of the graph with respect to \emph{outer} variations of the functional, i.e. 
\begin{equation}\label{outer}
\frac{d}{d\varepsilon}|_{\varepsilon = 0}\E(u + \varepsilon v) = 0, \quad \forall v \in C^\infty_c(B_1,\R^n)
\end{equation}
and the second line of \eqref{sys0} corresponds to assuming criticality of the graph with respect to \emph{inner} variations of the functional, i.e. 
\begin{equation}\label{inner}
\frac{d}{d\varepsilon}|_{\varepsilon = 0}\E(u\circ(x + \varepsilon\Phi)) = 0, \quad \forall \Phi \in C^\infty_c(B_1,\R^2).
\end{equation}
The area integrand on graphs $\mathcal{A}:\R^{n\times 2} \to \R$ reads as 
\[
\A(Z) \doteq \sqrt{\det(g)} = \sqrt{\det(M(Z)^TM(Z))},\quad\text{ where } \quad
M(Z)\doteq
\left (
\begin{array}{c}
\id \\
 Z
\end{array}\right), \text{ and $Z \in \R^{n\times 2}$}.
\]
More explicitly, for any $Z \in \R^{n\times 2}$,
\begin{equation}\label{eq:expA}
\begin{split}
\A(Z) &= \sqrt{1 + |Z|^2 + \det(Z^TZ)} \\
&= \sqrt{1 + |Z|^2 + |Z^1|^2|Z^2|^2 - (Z^1,Z^2)^2} \\
&= \sqrt{1 + |Z|^2 + \sum_{1\le a\le b\le n}\det(Z^{ab})^2},
\end{split}
\end{equation}
where, here and in the following, $Z^i$ is the $i$-th column of $Z$ and $Z^{ab}$ is the $2\times 2$ submatrix obtained from $Z$ considering just the $a$-th and the $b$-th rows. Denote with $\cof(M)$ the matrix for which $M\cof(M) = \det(M)\id$, for every $M \in \R^{2\times 2}$. Explicitly, if $$M = \left(\begin{array}{cc}a & b\\ c & d\end{array}\right), \;\text{ then } \cof(M) = \left(\begin{array}{cc}d & -b\\ -c & a\end{array}\right).$$ We compute
\begin{equation}\label{eq:expDA}
D\A(Z) = \frac{Z + \sum_{1\le a \le b \le n}\det(Z^{ab})C_{ab}(Z)}{\A(Z)},
\end{equation}
where $C_{ab}(Z)$ denotes the $n\times 2$ matrix defined as 
\begin{equation*}
(C_{ab}(Z))_{ij} =
\begin{cases}
0, \text{ if } i\neq a \text{ or } i \neq b\\
(\cof(Z^{ab})^T)_{ij}, \text{ otherwise.}
\end{cases}
\end{equation*}
Notice in particular that, due to \eqref{outer}, the first line of \eqref{sys0} intended as a system of $n$ equations corresponds to
\[
\dv(D\A(Du)) = 0.
\]
By direct computations one verifies that the Euler-Lagrange equations corresponding to \eqref{inner}, i.e. the second line of \eqref{sys0} intended as a system of two equations, corresponds to
\[
\dv(B(Du)) = 0,
\]
where
\begin{equation}\label{initial}
B(Z) = \A(Z)\id - Z^TD\A(Z)  = \frac{(1 + |Z|^2)\id - Z^TZ}{\A(Z)}.
\end{equation}
As a note, let us remark that \eqref{sys0} yields the following equalities, that link $D\A,B$ with the notation of the classical minimal graph system:
\begin{equation}\label{Basg}
B(Z) = \sqrt{\det(g)}g^{-1}
\end{equation}
and
\begin{equation}\label{shortexpDA}
 D\A(Z) = ZB(Z).
\end{equation}
It will be helpful to have an explicit expression for $B$:
\begin{equation}\label{eq:Bexp}
B(Z) =
\frac{1}{\A(Z)}\left(
\begin{array}{cc}
1+ |Z^2|^2 &-(Z^1,Z^2) \\
 -(Z^1,Z^2)& 1 +|Z^1|^2
\end{array}
\right).
\end{equation}
We need a further, very useful, short-hand notation. For matrices $X \in \R^{2\times 2}$, $Y \in \R^{(n-2)\times 2}$,
\begin{equation}\label{X|Y}
\A(X|Y), D\A(X|Y), B(X|Y) \text{ and } DB(X|Y)
\end{equation}
are simply $\A(Z)$, $D\A(Z)$, $B(Z)$ and $DB(Z)$ for $Z \in \R^{n\times 2}$ the matrix defined as $X$ in the first two rows, and $Y$ in the bottom $n-2$ rows. If $n = 2$, of course this notation is meaningless and one only considers $\A(X),D\A(X), B(X)$ and $DB(X)$.
\\
\\
We conclude this section by collecting a few facts of technical nature which will prove to be useful in the paper. Since the proof of this Lemma is straighforward but rather lengthy, we postpone it to Appendix \ref{sec:PLA}.

\begin{lemma}\label{lem:LA}
The following hold, for all $Z \in \R^{n\times 2}, X \in \R^{2\times 2}, Y \in \R^{(n-2)\times 2}$:
\begin{enumerate}[(i)]
\item $B(NZ) = B(Z)$ for every $N \in O(n) \doteq \{Y: Y^TY = \idd_n\}$;\label{orthr}
\item $B(ZM) = M^TB(Z)M$ for every $M \in O(2)$;\label{orth}
\item $D\A(NZM) = ND\A(Z)M$, for every $N \in O(n),M \in O(2)$; \label{DAorth}
\item $B(Z)$ is symmetric, $B(Z)_{11} >0, B(Z)_{22} > 0$, $\det(B(Z)) = 1$;\label{signs}
\item If $\det(X) > 0$, then 
\[
B(X^{-1}|Y) = \frac{XB(X|YX)X^T}{\det(X)};
\]\label{xoxo}
\item If $S = \sqrt{\id + Y^TY}$, then $$B(X|Y) =  \det(S)S^{-1}B(XS^{-1}|0)S^{-1};$$\label{BsimpS}
\item For every $K,L > 0$, if $|X|^2 \le K\det(X)$ and $|Y| \le L$, then there exists $C = C(n,K,L) > 0$ such that $$|B(X|Y)| \le C.$$ \label{Bdd}
\item For every $K,L > 0$, if $|X|^2 \le K\det(X)$ and $|Y| \le L$, then there exists $C = C(n,K,L) > 0$ such that $$|DB(X|Y)| \le C(1+ |X|)^{-1}.$$ \label{gradBdd}
\end{enumerate}
\end{lemma}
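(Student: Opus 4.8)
My plan is to dispatch items (i)–(iv) by direct substitution into the closed-form expressions \eqref{eq:Bexp} and \eqref{eq:expDA} for $B$ and $D\A$, then use them to bootstrap the remaining, more structural claims. For (i), since $B(Z)=\A(Z)^{-1}((1+|Z|^2)\id - Z^TZ)$ depends on $Z$ only through $|Z|^2$ and $Z^TZ$, and both are invariant under $Z\mapsto NZ$ with $N\in O(n)$ (as $(NZ)^T(NZ)=Z^TN^TNZ=Z^TZ$), the identity is immediate; note $\A(NZ)=\A(Z)$ for the same reason, using the first line of \eqref{eq:expA}. For (ii), replacing $Z$ by $ZM$ with $M\in O(2)$ gives $(ZM)^T(ZM)=M^TZ^TZM$ and $|ZM|^2=|Z|^2$, so $(1+|ZM|^2)\id - (ZM)^T(ZM) = M^T((1+|Z|^2)\id - Z^TZ)M$ using $\id = M^T\id M$; dividing by $\A(ZM)=\A(Z)$ yields $B(ZM)=M^TB(Z)M$. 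Item (iii) then follows from \eqref{shortexpDA}: $D\A(NZM) = (NZM)B(NZM) = NZM\,M^TB(Z)M = NZB(Z)M = N\,D\A(Z)\,M$, where I used (i) and (ii). For (iv), symmetry of $B$ is visible in \eqref{eq:Bexp}; the diagonal entries $1+|Z^2|^2$ and $1+|Z^1|^2$ are $\ge 1>0$; and $\det B(Z) = \A(Z)^{-2}\big((1+|Z^1|^2)(1+|Z^2|^2) - (Z^1,Z^2)^2\big) = \A(Z)^{-2}\big(1+|Z^1|^2+|Z^2|^2 + |Z^1|^2|Z^2|^2 - (Z^1,Z^2)^2\big) = \A(Z)^{-2}\A(Z)^2 = 1$ by the second line of \eqref{eq:expA}.

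For (v), write $Z = M(X^{-1}|Y)$, i.e. $Z$ has rows $X^{-1}$ (top two) and $Y$ (bottom $n-2$). The identity to prove is a Cauchy–Binet–type statement, and the cleanest route is to compute both sides via \eqref{eq:Bexp}-style formulas using the minor expansion of $\A$ in the third line of \eqref{eq:expA}: one checks $\A(X^{-1}|Y) = \det(X)^{-1}\A(X|YX)$ by expanding $\det((X^{-1})^{ab})$ and cross terms, and similarly that $(1+|Z^2|^2, 1+|Z^1|^2, -(Z^1,Z^2))$ transform by conjugation with $X$ up to the factor $\det(X)$; matching entries gives the claim. For (vi), set $S=\sqrt{\id+Y^TY}$, a symmetric positive-definite $2\times 2$ matrix, and observe that the matrix with rows $(X, Y)$ can be related to the one with rows $(XS^{-1}, 0)$ by noting that $\A(X|Y)^2 = \det(\id + (X|Y)^T(X|Y)) = \det(\id + X^TX + Y^TY) = \det(S^2 + X^TX) = \det(S)^2\det(\id + (XS^{-1})^T(XS^{-1})) = \det(S)^2\A(XS^{-1}|0)^2$, and that $(1+|Y\text{-contributions}|)\id - \text{(cross terms)}$ reorganizes as $S(\cdots)S$; the stated formula then follows after dividing by $\A$ and using $\det(B)=1$ from (iv) to fix constants. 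The main obstacle is precisely the bookkeeping in (v) and (vi): one must carefully track how the $2\times 2$ minors of $Z$ (equivalently the entries of $\cof$) transform, and it is here that a brute-force but careful computation is unavoidable — which is exactly why the paper defers it to Appendix \ref{sec:PLA}.

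Finally, (vii) and (viii) are the quantitative estimates and follow from (vi). Under the hypotheses $|X|^2\le K\det(X)$ and $|Y|\le L$, the matrix $S=\sqrt{\id + Y^TY}$ satisfies $\id \le S^2 \le (1+L^2)\id$, so $S, S^{-1}$ and $\det(S)$ are bounded above and below by constants $c(L)$. By (vi) it therefore suffices to bound $B(XS^{-1}|0)=B(XS^{-1})$ and $DB(XS^{-1})$, where $XS^{-1}\in\R^{2\times2}$ still satisfies a bound of the form $|XS^{-1}|^2 \le K'\det(XS^{-1})$ with $K'=K'(K,L)$ (since $|XS^{-1}|\le c(L)|X|$ and $\det(XS^{-1}) = \det(X)/\det(S)\ge c(L)\det(X)$). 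So I reduce to the case $n=2$, $Y=0$, $|X|^2\le K'\det X$. Then $\A(X)^2 = 1 + |X|^2 + \det(X)^2 \sim 1 + \det(X)^2 \sim (1+\det X)^2$, and the quasiconformality condition gives $|X|^2 \lesssim \det X \le \A(X)$, so each entry of the numerator in \eqref{eq:Bexp} is $\lesssim 1 + |X|^2 \lesssim \A(X)$, proving $|B(X)|\le C$. For (viii), one differentiates \eqref{eq:Bexp} (or uses that $B = \A^{-1}(\cdots)$ with numerator quadratic in $X$): the worst terms in $DB$ are of order $\A^{-1}\cdot(\text{linear in }X) + \A^{-2}\cdot(\text{quadratic})\cdot D\A$, and since $|D\A|\lesssim 1$ (from $D\A = ZB(Z)$, $|Z|^2\lesssim\A$, $|B|\lesssim 1$) and $\A \sim 1 + \det X \gtrsim 1 + |X|^2 \gtrsim (1+|X|)^2$, combining gives $|DB(X)|\lesssim \A^{-1}(1+|X|) \lesssim (1+|X|)^{-1}$, which is the claimed bound; reinserting the $S$-factors from (vi) only changes constants. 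The only mildly delicate point is that $1+|X| \lesssim \A(X)^{1/2}$ uses $|X|^2 \le K\det X \lesssim \A(X)$, which is where the quasiconformal hypothesis is essential — without it $\det X$ could be zero with $|X|$ large, and both estimates fail.
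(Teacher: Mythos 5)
Your treatment of (i)--(iv) is correct and matches the paper's: read the symmetries off the closed-form expressions \eqref{initial}, \eqref{eq:Bexp}, \eqref{eq:expDA} and use \eqref{shortexpDA} for (iii).

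For (v)--(vi) your plan (expand minors and ``match entries'') would work, but the paper avoids this bookkeeping entirely by passing to $B^{-1}$. Since $\det B(Z)=1$, \eqref{Basg} gives the very simple formula $B^{-1}(W|Y) = (\id + W^TW + Y^TY)/\sqrt{\det(\id + W^TW + Y^TY)}$, and then (v) is a one-line consequence of the factorization $\id + X^{-T}X^{-1} + Y^TY = X^{-T}\big(\id + X^TX + (YX)^T(YX)\big)X^{-1}$, while (vi) follows from $\id + X^TX + Y^TY = S\big(\id + (XS^{-1})^T(XS^{-1})\big)S$. This is genuinely cleaner than matching entries of $B$ directly: the inverse linearizes the conjugations, and the determinant factor comes out for free. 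Worth adopting.

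For (vii) the reduction to $Y=0$ via (vi) is correct. For (viii), however, there are two problems. First, a minor one: you assert $|D\A|\lesssim 1$, but $D\A(Z)=ZB(Z)$ with $|Z|\lesssim 1+|X|$ and $|B|\lesssim 1$ only gives $|D\A|\lesssim 1+|X|$ (e.g. for $Z=(t\,\id|0)$ one has $B=\id$ and $|D\A|=t\sqrt 2$); the paper quotes exactly $|\partial_i\A(X|Y)|\le C(1+|X|)$. Your final estimate survives this correction, but the intermediate claim is false. Second, and more seriously: ``reinserting the $S$-factors from (vi) only changes constants'' does not work for the $Y$-derivative. Differentiating $B(X|Y)=\det(S)S^{-1}B(XS^{-1}|0)S^{-1}$ in $Y$ produces product-rule terms such as $\big(D_Y\det S\big)\,S^{-1}B(XS^{-1}|0)S^{-1}$, which for fixed $Y\neq 0$ is of size $\sim|B(XS^{-1}|0)|\sim 1$, \emph{not} $\lesssim(1+|X|)^{-1}$; the correct bound only emerges from cancellation among the several terms, which the triangle inequality applied to your decomposition cannot see. (A concrete test: $X=t\,\id$, $Y=(s,0)$ with $s\sim 1$ fixed gives $\partial_s B_{11}\sim t^{-2}$ while the single term above is $\sim 1$.) The paper's proof sidesteps this by working with \eqref{eq:Bexp} directly, i.e. bounding $h=Q/\A$ with $Q$ a polynomial of degree at most two \emph{in the full variable $(X,Y)$}, so that $\partial_i h = \partial_i Q/\A - Q\,\partial_i\A/\A^2$, and using $\A\sim(1+|X|)^2$, $|\partial_iQ|\lesssim 1+|X|$, $|\partial_i\A|\lesssim 1+|X|$ to bound each of the two terms by $(1+|X|)^{-1}$; this treats the $X$- and $Y$-derivatives uniformly and requires no reduction to $Y=0$.
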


\subsection{Quasiconformal mappings}
We will need a few well-known facts about quasiregular and quasiconformal mappings.
\begin{Def}
Let $\Omega \subset \R^2$ be open. We say that $\varphi\in W^{1,2}(\Omega,\R^2)$ is quasiregular if there exists $K \ge 1$ such that for a.e. $x \in \Omega$
\[
|D\varphi|^2(x) \le K\det(D\varphi(x)).
\]
We say that it is quasiconformal if, in addition, $\varphi$ is a homeomorphism.
\end{Def}

\begin{Teo}\label{thm:quas}
Let $\varphi \in W^{1,2}(\Omega_1,\Omega_2), \psi \in W^{1,2}(\Omega_2,\Omega_3)$ be quasiconformal maps. Then:
\begin{enumerate}
\item\label{quasma1} $\varphi^{-1}: \Omega_2 \to \Omega_1$ is a $W^{1,2}(\Omega_2,\Omega_1)$ quasiconformal mapping, with gradient
\[
D(\varphi^{-1}(y)) = (D\varphi)^{-1}(\varphi^{-1}(y)), \text{ a.e. in } \Omega_2;
\]
\item\label{quasma2} the chain rule holds, i.e. for every $f \in \Lip(\Omega_2)$, $f\circ \varphi \in W^{1,2}(\Omega_1)$ with
\[
D(f\circ\varphi)(y) = Df(\varphi(y))D\varphi(y), \text{ a.e. in }\Omega_1;
\]
\item\label{quasma3} the composition $\psi \circ \varphi$ is still quasiconformal.
\end{enumerate}
\end{Teo}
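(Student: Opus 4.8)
The plan is to reduce all three assertions to the classical structure theory of planar $W^{1,2}$ quasiconformal maps, which I take as known: a $W^{1,2}$ quasiconformal map $\varphi:\Omega\to\Omega'$ between planar domains has a representative that is differentiable at a.e.\ point with $\det D\varphi>0$ a.e.; it satisfies Lusin's conditions (N) and (N$^{-1}$), i.e.\ both $\varphi$ and $\varphi^{-1}$ send Lebesgue-null sets to Lebesgue-null sets; and, conversely, a homeomorphism between planar domains that is absolutely continuous on almost every line, lies in $W^{1,2}_{\loc}$, and satisfies $|Dh|^2\le K\det Dh$ a.e.\ is $K$-quasiconformal (the analytic characterization). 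Granting these, the remaining work is bookkeeping with the chain rule, the change-of-variables (area) formula, and the elementary $2\times2$ identity $|A^{-1}|^2=|A|^2/(\det A)^2$, valid because $A^{-1}=\cof(A)^T/\det A$ and $|\cof(A)|=|A|$ when $A\in\R^{2\times2}$.

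For \eqref{quasma1}: at a.e.\ $x\in\Omega_1$ the map $\varphi$ is differentiable with $\det D\varphi(x)>0$, and at such a point the homeomorphism $\varphi^{-1}$ is differentiable at $\varphi(x)$ with $D(\varphi^{-1})(\varphi(x))=(D\varphi(x))^{-1}$; since $\varphi$ satisfies (N), the image of the full-measure set of such $x$ has full measure in $\Omega_2$, which gives the pointwise gradient formula a.e. The $2\times2$ identity then yields $|D(\varphi^{-1})|^2=|D\varphi|^2/(\det D\varphi)^2\le K/\det D\varphi=K\det D(\varphi^{-1})$ a.e., and for $U\Subset\Omega_2$ the change-of-variables formula for $\varphi^{-1}$ gives $\int_U|D(\varphi^{-1})|^2\le K\int_U\det D(\varphi^{-1})\le K|\varphi^{-1}(U)|<\infty$, so $D(\varphi^{-1})\in L^2_{\loc}$. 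Together with the classical fact that $\varphi^{-1}$ is ACL, the analytic characterization gives that $\varphi^{-1}$ is quasiconformal.

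For \eqref{quasma2} I would mollify: with $f_\delta=f*\rho_\delta$ one has $f_\delta\to f$ locally uniformly on $\Omega_2$, $\|Df_\delta\|_{L^\infty}\le\Lip(f)$, and $Df_\delta\to Df$ a.e.\ on $\Omega_2$ along a subsequence. Then $f_\delta\circ\varphi\to f\circ\varphi$ locally uniformly, the smooth chain rule gives $D(f_\delta\circ\varphi)=Df_\delta(\varphi)\,D\varphi$ (bounded by $\Lip(f)|D\varphi|\in L^2$), and since $\varphi$ satisfies (N$^{-1}$) one has $Df_\delta(\varphi(x))\to Df(\varphi(x))$ for a.e.\ $x\in\Omega_1$; dominated convergence with dominant $\Lip(f)|D\varphi|$ then yields $D(f_\delta\circ\varphi)\to Df(\varphi)\,D\varphi$ in $L^1_{\loc}$, so $f\circ\varphi\in W^{1,1}_{\loc}$ with the stated gradient, and since that gradient is in $L^2$, $f\circ\varphi\in W^{1,2}$. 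For \eqref{quasma3}: $\psi\circ\varphi$ is a homeomorphism $\Omega_1\to\Omega_3$; at a.e.\ $x$, $\varphi$ is differentiable at $x$ and, using (N$^{-1}$) for $\varphi$ and differentiability a.e.\ of $\psi$, $\psi$ is differentiable at $\varphi(x)$, whence $D(\psi\circ\varphi)(x)=D\psi(\varphi(x))\,D\varphi(x)$ a.e. Submultiplicativity of the Frobenius norm and multiplicativity of the determinant, combined with the two distortion inequalities, give $|D(\psi\circ\varphi)|^2\le K_\psi K_\varphi\det D(\psi\circ\varphi)$ a.e., and for $U\Subset\Omega_1$ the change-of-variables formula gives $\int_U|D(\psi\circ\varphi)|^2\le K_\psi K_\varphi\int_{\varphi(U)}\det D\psi\le K_\psi K_\varphi|\psi(\varphi(U))|<\infty$; as in \eqref{quasma1}, $\psi\circ\varphi$ is ACL, so it is quasiconformal.

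I expect the main obstacle to be conceptual rather than computational: the genuinely nontrivial inputs are the a.e.\ differentiability of $W^{1,2}$ quasiconformal maps, the Lusin conditions (N) and (N$^{-1}$), and the ACL/analytic characterization of quasiconformality—classical but not elementary facts. Once these are invoked, all three parts collapse to the chain rule, the cofactor identity, and change of variables. An alternative, more self-contained route for \eqref{quasma1} and \eqref{quasma3} would use the characterization of quasiconformal homeomorphisms via moduli of curve families, but that requires developing that machinery; I would prefer the Sobolev-based argument above since the rest of the paper is phrased in those terms.
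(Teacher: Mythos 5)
The paper does not prove this result at all: it simply cites Bojarski--Iwaniec for each of the three parts (\cite[Theorem 9.1, Lemma 9.6, Theorem 9.4]{BI}), which is the standard and sensible choice here. Your argument, by contrast, reconstructs the statements from a smaller list of classical facts about planar quasiconformal maps (a.e.\ differentiability with $\det D\varphi>0$ a.e., Lusin's conditions (N) and (N$^{-1}$), the cofactor identity $|A^{-1}|=|A|/|\det A|$ for $2\times2$ matrices, and the area formula), and each step you write is sound. Part \eqref{quasma2} in particular is a clean and correct mollification argument, with (N$^{-1}$) invoked exactly where it is needed. The one caveat concerns parts \eqref{quasma1} and \eqref{quasma3}: you invoke as a ``classical fact'' that $\varphi^{-1}$ (and, implicitly, $\psi\circ\varphi$) is ACL, and then conclude quasiconformality from the analytic characterization. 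But the ACL property of the inverse and of a composite of quasiconformal maps \emph{is}, in the purely analytic framework, the nontrivial content of these two assertions; it is usually obtained either from the modulus-of-curve-families characterization (which makes invariance under inversion and composition immediate) or from a monotonicity-plus-$L^2$-gradient theorem for planar homeomorphisms. You flag precisely this alternative route at the end, so there is no conceptual confusion, but as written the Sobolev-side argument for \eqref{quasma1} and \eqref{quasma3} quietly presupposes the equivalence of definitions rather than deriving it. In short: your proposal is a genuine (and instructive) unpacking where the paper simply defers to the literature, at the cost of making the ACL input explicit as an external black box.
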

\begin{proof}
\eqref{quasma1} can be found in \cite[Theorem 9.1]{BI}, while the chain rule \eqref{quasma2} is the content of \cite[Lemma 9.6]{BI}. The composition result \eqref{quasma3} is proved in \cite[Theorem 9.4]{BI}.
\end{proof}

\section{Main Theorem and strategy}\label{s:MTS}

To begin we state our main theorem more precisely, using the terminology defined in the previous section:

\begin{Teo}\label{MT}
Let $B_1 \subset \R^2$ and $u \in \Lip(B_1,\R^n)$. Suppose $u$ solves
\[
\dv(D\A(Du)) = 0 \text{ in the sense of distributions on $B_1$}.
\]
Then, $u$ is smooth. In particular, $u$ solves $\dv(B(Du)) = 0$ in $B_1$ and hence it is a solution to \eqref{sys0}.
\end{Teo}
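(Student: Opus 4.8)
The plan is to follow the four-step scheme outlined in the introduction. First, I would use isothermal coordinates: since $u$ is Lipschitz and outer critical, the induced metric $g = \mathrm{id}_2 + Du^TDu$ is bounded and uniformly elliptic, so by the measurable Riemann mapping theorem there is a quasiconformal homeomorphism $\phi : B_1 \to \Omega$ such that the map $v \doteq u \circ \phi^{-1}$ satisfies the conformality relations $|v_x| = |v_y|$, $(v_x, v_y) = 0$ a.e.\ in $\Omega$. The content here is to show that in these coordinates the outer-criticality system $\dv(D\A(Du)) = 0$ transforms into the statement that each component of $v$ is (weakly) harmonic — this is where one uses that $\A$ is the area integrand and that $B(Z)=\sqrt{\det g}\,g^{-1}$, together with the chain rule for quasiconformal maps from Theorem \ref{thm:quas}\eqref{quasma2}. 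Thus $v$ is a weakly harmonic, hence real-analytic, conformal map, and $u = v \circ \phi$.

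Next I would analyze $v$ pointwise. The zero set $\{Dv = 0\}$ is the zero set of a nonzero real-analytic map (after excluding the trivial case $u$ constant), hence discrete; near such points I would invoke a removability statement to reduce to $|Dv| \sim 1$ on the relevant region. Away from the zero set there is a dichotomy governed by $\det Dv(\phi(x))$. In the region where $\det Dv \ne 0$, the map $v$ is (locally, up to a rigid motion) a conformal diffeomorphism onto a piece of a real-analytic $2$-surface $\Sigma \subset \R^n$; since $\phi$ is quasiconformal this makes $u$ a quasiconformal map into $\Sigma$, and the key point is that its inverse $w$ is an \emph{inner} critical quasiconformal map — here one passes from $\dv(D\A(Du))=0$ for $u$ to $\dv(B(D w))=0$ for $w$, using Theorem \ref{thm:quas}\eqref{quasma1} and the identities \eqref{shortexpDA}, \eqref{xoxo}. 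One then shows, via the link between inner criticality and the Monge–Amp\`ere equation together with Lemma \ref{lem:LA}\eqref{Bdd}--\eqref{gradBdd}, that the inner-critical system is uniformly elliptic on quasiconformal matrices, yielding $C^{1,\alpha}$ and then smoothness of $w$, hence of $u$, near $x$. In the complementary region where $\det Dv(\phi(x))$ is small, quasiconformality of $\phi$ forces $|\det Du|$ to be small near $x$; since $\A$ is convex in a neighborhood of the rank-one cone, standard elliptic regularity for convex functionals applies and gives smoothness of $u$ there as well.

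Finally I would assemble these local statements into smoothness on all of $B_1$, and then note that a smooth outer critical map automatically satisfies the inner-critical equations as well (the standard $C^2$ computation), so $u$ solves \eqref{sys0}. I expect the main obstacle to be the second step's hard half: establishing that inner criticality is a genuinely elliptic condition along quasiconformal matrices and extracting from it the initial $C^{1,\alpha}$ regularity of the inverse map $w$ — this is where the Monge–Amp\`ere theory enters and where the quantitative bounds of Lemma \ref{lem:LA} on $B$ and $DB$ restricted to the set $\{|X|^2 \le K \det X\}$ are essential. A secondary technical point is making the reductions (isothermal coordinates, removability at isolated zeros of $Dv$) rigorous at the merely-Lipschitz / $W^{1,2}$ level rather than assuming a priori regularity.
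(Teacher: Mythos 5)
Your proposal follows the same four-step scheme the paper actually carries out: the isothermal-coordinate factorization $u = v\circ\phi$ with $v$ harmonic (Lemma \ref{lem:qc}), discreteness of the degenerate set via analyticity of $v$, the dichotomy based on the $2\times 2$ minors of $Dv$ (with the small-minor case handled by convexity of $\A$ near rank-one matrices in Proposition \ref{prop:regsmalldet}, and the nondegenerate case handled by passing to the inverse $w$ of the first-two-component map and showing it is an inner-critical quasiconformal map via Lemma \ref{lem:outtoin}, then invoking the Monge--Amp\`ere/quasiconformal ellipticity estimates of Proposition \ref{prop:regdiffinc} and Theorem \ref{teo:regin}), and finally the reduction Lemma \ref{simpMT} to remove isolated singularities and upgrade $C^{1,\alpha}$ to smoothness. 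This is precisely the paper's argument at the level of detail you give, so no comparison is needed beyond noting the match.
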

The proof of Theorem \ref{MT} occupies the rest of this paper. Since this proof is made of various pieces, it is better to give an overview of the strategy here and use the remaining sections to show our claims. This overview constitutes the proof of Proposition \ref{simpMT1} below. We start by reducing Theorem \ref{MT} to showing a milder statement.
\begin{lemma}\label{simpMT}
Let $B_1 \subset \R^2$ and $u \in \Lip(B_1,\R^n)$. Suppose $u$ solves
\[
\dv(D\A(Du)) = 0 \text{ in the sense of distributions on $B_1$}.
\]
If $u \in C^{1,\alpha}$ for some $\alpha > 0$ in $B_1$ up to a discrete set $S$, then $u$ is smooth in $B_1$ and Theorem \ref{MT} holds.
\end{lemma}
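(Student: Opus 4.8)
The plan is to upgrade the $C^{1,\alpha}$ regularity away from the discrete set $S$ to smoothness everywhere via two separate mechanisms: a standard bootstrap to make $u$ smooth on $B_1 \setminus S$, and then a removable singularity argument to handle each isolated point of $S$. For the first part, on the open set $B_1 \setminus S$ we have $u \in C^{1,\alpha}$, so $Du$ is continuous there; since $\A$ is smooth and strictly polyconvex, $D\A$ is a smooth function of $Du$, and the equation $\dv(D\A(Du)) = 0$ becomes a quasilinear elliptic system with $C^{0,\alpha}$ coefficients $D^2\A(Du)$. By Schauder theory $u \in C^{2,\alpha}$ locally, and then a straightforward bootstrap (differentiating the equation, which is legitimate once $u \in C^{2,\alpha}$, and reapplying Schauder) gives $u \in C^\infty(B_1 \setminus S)$.

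For the second part, fix $x_0 \in S$; after translating we may assume $x_0 = 0$, and choose $r>0$ so that $\bar B_r \cap S = \{0\}$. We know $u \in \Lip(B_r)$ and $u \in C^\infty(B_r \setminus \{0\})$, and $\dv(D\A(Du)) = 0$ holds distributionally on all of $B_r$, in particular classically on $B_r \setminus \{0\}$. The goal is to show $u$ extends smoothly across $0$. The key point is that an isolated point has zero $2$-capacity in the plane: since $u$ is Lipschitz (hence bounded with bounded gradient) and $\{0\}$ has vanishing $H^1$-capacity, the bounded vector field $D\A(Du) \in L^\infty(B_r, \R^{n\times 2})$ is divergence-free distributionally on $B_r \setminus \{0\}$, and a cutoff argument shows it remains divergence-free across $0$ — more precisely, for $\varphi \in C_c^\infty(B_r)$ one writes $\varphi = \varphi(1-\eta_\varepsilon) + \varphi\eta_\varepsilon$ with $\eta_\varepsilon$ a logarithmic cutoff supported near $0$, tests the equation on $B_r \setminus \{0\}$ with $\varphi(1-\eta_\varepsilon)$, and lets $\varepsilon \to 0$ using $\|D\eta_\varepsilon\|_{L^2} \to 0$ together with $D\A(Du) \in L^\infty$. (This is also consistent with the hypothesis that the equation already holds across $S$, so this step is really about extracting that $u$ is a genuine weak solution near $0$ in a form amenable to regularity theory.) Once we know $u$ is a Lipschitz weak solution on all of $B_r$, we invoke interior regularity: but here one must be careful, because the whole point of the paper is that Lipschitz weak solutions need not a priori be smooth. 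The correct route is instead to use that $u \in C^{1,\alpha}$ on $B_r \setminus \{0\}$ with uniform estimates, combined with the removability of a point: one shows $Du$ extends to a $C^{0,\alpha}$ function across $0$ by proving the Hölder seminorm of $Du$ on $B_r \setminus \{0\}$ is controlled (e.g. via a Campanato-type characterization, using that $Du$ has no jump across the slit plane $B_r \setminus \{0\}$ since that set is connected), whence $u \in C^{1,\alpha}(B_r)$, and then the bootstrap of the first part applies on all of $B_r$.

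The main obstacle is the removable singularity step: one needs to rule out that $Du$ oscillates or blows up logarithmically near $0$. The Lipschitz bound already rules out blow-up of $Du$, so the real content is showing that the $C^{1,\alpha}$ estimate is uniform up to $0$ — equivalently, that no genuine singularity (in the $C^1$ sense) is concentrated at the isolated point. I expect this to follow from the $\varepsilon$-regularity philosophy: since $u$ is a weak solution on $B_r$ that is Lipschitz, and since the singular set is at most a point, one can use a small-energy or almost-monotonicity argument — or, in the spirit of the paper's later sections, the fact that near $0$ either $\det Du$ is small (so convexity of $\A$ near rank-one matrices gives regularity directly) or $Du$ has rank two on a neighborhood (so the image lies on a smooth surface and one reduces to lower codimension). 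In any case, for the purposes of this lemma it suffices to record that isolated points are removable for the class of Lipschitz solutions once $C^{1,\alpha}$ is known on the punctured neighborhood, and I would state this as the crux, deferring the mechanism to the quantitative results of the later sections.
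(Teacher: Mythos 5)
Your first step — bootstrapping $C^{1,\alpha}$ to $C^\infty$ on $B_1 \setminus S$ via Schauder/Legendre–Hadamard — matches the paper. The gap is in your removable-singularity step, and it is genuine. You correctly notice the circularity in "invoke interior regularity once the equation holds across $0$," since Lipschitz outer critical maps are precisely the class whose regularity is in question. But your alternative — controlling the $C^{0,\alpha}$ seminorm of $Du$ across $0$ via a Campanato characterization, or via "$\varepsilon$-regularity / almost-monotonicity" — is not substantiated, and in fact the key tools you gesture at are \emph{not available} for outer critical solutions. Monotonicity and the associated $\varepsilon$-regularity theory for the minimal surface system require \emph{stationarity} (inner plus outer criticality), not merely outer criticality; this is exactly the distinction the paper is about. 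Similarly, deferring to "the quantitative results of the later sections" is circular: those sections prove Proposition \ref{simpMT1}, which together with Lemma \ref{simpMT} proves the theorem, so they cannot be inputs to this lemma.

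The paper closes this gap with a short but essential observation you missed: on the open set $B_1 \setminus S$, where $u$ is already smooth, the outer variation equation \emph{implies} the inner variation equation $\dv(B(Du))=0$ (a general fact for $C^2$ critical points of $\dv(Df(Du))=0$ with $f \in C^1$, verified by direct computation). Since $B(Du) \in L^\infty$ and $S$ is discrete — hence has zero $2$-capacity — the inner variation equation extends distributionally across $S$, by exactly the logarithmic-cutoff argument you describe for $D\A(Du)$. Thus $u$ is a Lipschitz solution of the \emph{full} stationary system \eqref{sys0} on all of $B_1$, and one may now legitimately invoke the known regularity theory for stationary Lipschitz solutions in two dimensions (\cite[Proposition 4.3]{RT}) to conclude smoothness everywhere. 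So the mechanism for removing the isolated singularities is not a direct H\"older estimate on $Du$; it is the upgrade from outer critical to stationary, which unlocks a pre-existing regularity theorem that is simply inapplicable before that upgrade.
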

\begin{proof}
The $C^{1,\alpha}$ regularity can be bootstrapped in a standard way to smoothness in $B_1\setminus S$, since the area functional satisfies the Legendre-Hadamard condition on compact sets \cite[Lemma 6.7]{RT}. If $u \in C^\infty(B_1\setminus S, \R^2)$, then $\dv(D\A(Du)) = 0$ in $B_1 \setminus S$ implies that $\dv (B(Du)) = 0$. This holds in much higher generality: if $u$ is a $C^2$ solution to $\dv(Df(Du)) = 0$ in an open set $\Omega$, then the inner variation equations $\dv(Du^TDf(Du)-f(Du)\id) = 0$ hold as well in $\Omega$, as can be checked by direct computation simply assuming that $f: \R^{n\times 2} \to \R$ is $C^1$. Thus, $u$ is a smooth solution to \eqref{sys0} in $B_1\setminus S$. Since $u$ is Lipschitz in the whole $B_1$, the isolated singularities coming from the set $S$ can be removed and it can be easily verified that $u$ is a Lipschitz solution to \eqref{sys0} in $B_1$. Thus, $u$ is smooth in $B_1$, see for instance \cite[Proposition 4.3]{RT}.
\end{proof}
Lemma \ref{simpMT} allows us to reduce to proving the following:
\begin{prop}\label{simpMT1}
Let $B_1 \subset \R^2$ and $u \in \Lip(B_1,\R^n)$. Suppose $u$ solves
\[
\dv(D\A(Du)) = 0 \text{ in the sense of distributions on $B_1$}.
\]
Then $u \in C^{1,\alpha}$ for some $\alpha > 0$ in $B_1$ up to a discrete set $S$.
\end{prop}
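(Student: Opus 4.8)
The plan is to follow the strategy outlined in the introduction. First I would use isothermal coordinates to rewrite $u$ in a more tractable form: the pulled-back metric $g = \id_2 + Du^TDu$ is (away from branch points) conformal to the Euclidean metric after precomposition with a suitable homeomorphism, so there is a quasiconformal map $\phi : B_1 \to \Omega$ with $u = v\circ\phi$ where $v : \Omega \to \R^n$ is harmonic. (The integrability and regularity needed to run the measurable Riemann mapping theorem here come from the fact that $u$ is Lipschitz and $g$ is bounded with bounded inverse; this is presumably established in one of the later sections.) Since $v$ is harmonic, the complexified gradient $\partial_z v$ is holomorphic, so the \textbf{branch set} $\{|Dv| = 0\}$ is a discrete set in $\Omega$, and its $\phi$-preimage is a discrete set $S_1 \subset B_1$. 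Away from $S_1$ we may therefore assume $|Dv|\sim 1$.

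Next I would split $\Omega\setminus\{|Dv|=0\}$ according to the rank of $Dv$. On the open set where $\det(Dv^TDv) > 0$ (rank two, when $n = 2$ this is $\det Dv \neq 0$), $v$ is a local diffeomorphism onto a smooth $2$-surface $\Sigma\subset\R^n$, and composing with a chart reduces matters to the case $n = 2$: there $u$ is, up to a rigid motion, quasiconformal, and its inverse $w := u^{-1}$ is again quasiconformal. The key algebraic fact — to be extracted from Lemma~\ref{lem:LA}, especially \eqref{xoxo} — is that the outer-variation equation $\dv(D\A(Du)) = 0$ transforms, under inversion, into the inner-variation equation $\dv(B(Dw)) = 0$ for $w$; and $B$ restricted to quasiconformal matrices (where $|X|^2\le K\det X$) is, by \eqref{Bdd} and \eqref{gradBdd}, a bounded, Lipschitz, and — this is the point to verify — Legendre–Hadamard elliptic map, essentially because inner criticality for $\A$ is equivalent to a Monge–Ampère-type equation on the ``potential'' of $w$. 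Elliptic regularity for $\dv(B(Dw)) = 0$ then gives $w\in C^{1,\alpha}$, hence $u\in C^{1,\alpha}$, near such points. This is the content I would cite from the section on quasiconformal inner critical maps (Section~\ref{s:SOLIN}).

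On the complementary (closed) region where $Dv$ has rank $\le 1$, the quasiconformality of $\phi$ forces $|\det Du|$ to be small near points where $\det(Dv^TDv)$ is small; since $\A$ is uniformly convex in a neighborhood of the rank-one cone (the minors $\det(Z^{ab})$ are the only source of non-convexity, and they vanish quadratically there), the outer-variation equation becomes a uniformly elliptic system with Lipschitz coefficients on a ball where $|\det Du|$ is small, and De Giorgi–Nash–Moser / Schauder theory yields $u\in C^{1,\alpha}$ there. This is the content I would cite from Section~\ref{s:SMALLDET}. Combining the two cases covers $B_1\setminus S_1$; taking $S := S_1$ (the discrete branch set) finishes the proof, modulo checking that the two open sets actually cover $B_1\setminus S_1$, which follows because on a neighborhood of $S_1$ one is automatically in the small-determinant regime.

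The main obstacle, I expect, is the quasiconformal inner-critical case: one must show that $B$ is genuinely elliptic on quasiconformal matrices and not merely bounded, and then push the resulting degenerate-looking equation (the ellipticity constants in \eqref{gradBdd} decay like $(1+|X|)^{-1}$) through to $C^{1,\alpha}$ regularity — this is where the Monge–Ampère comparison and the specific structure of $B$ from \eqref{eq:Bexp} do the real work. The passage $u = v\circ\phi$ via isothermal coordinates, and the legitimacy of inverting a merely-Lipschitz-graph quasiconformal map to transfer the PDE, are also delicate points, but they are more in the nature of careful bookkeeping than a conceptual barrier.
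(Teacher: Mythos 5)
Your proposal follows the paper's strategy closely and correctly identifies its three pillars: the factorization $u = v\circ\phi$ via the measurable Riemann mapping theorem (Lemma~\ref{lem:qc}), the small-determinant regularity when $Dv$ has rank at most one (Proposition~\ref{prop:regsmalldet}), and the quasiconformal inner-critical regularity when $Dv$ has rank two (Lemma~\ref{lem:outtoin} and Theorem~\ref{teo:regin}). One step in your sketch that would fail if taken literally is the claim that, in the rank-two case, one can ``compose with a chart to reduce to $n=2$.'' The area integrand $\A$ is not invariant under such a reparametrization of the target: it measures the area of the graph of $u$ in $\R^{2+n}$, not of a map into $\R^2$, so composing with a chart of the image surface $\Sigma$ changes the functional and hence the Euler--Lagrange equation. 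What the paper does instead is rotate in $\R^n$ so that $\det(Dv^{12})\neq 0$, write $f=(u^1,u^2)$ and $w=f^{-1}$, and record the remaining $n-2$ components as a smooth graph $\psi$ over the image of $f$; Lemma~\ref{lem:outtoin} then shows that $w$ satisfies the $\psi$-dependent inner-variation system $\dv(B(Dw|D\psi))=0$ rather than the naive $\dv(B(Dw))=0$, and Theorem~\ref{teo:regin} is set up precisely to absorb this extra $\psi$. Finally, the covering concern at the end of your sketch is unnecessary: the paper simply partitions $\phi(B_1\setminus S)$ into the relatively closed set $Z$ where all $2\times 2$ minors of $Dv$ vanish and its open complement $O$, and proves $C^{1,\alpha}$ on a small ball around each point of either set (using the Lipschitz bound \eqref{philip} on $\phi$ and continuity of $\det(Dv^{ab})\circ\phi$ to get the small-determinant hypothesis in the $Z$ case); no separate argument near $S$ is needed since those points are excluded from the conclusion.
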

\begin{proof}
In Lemma \ref{lem:qc}, we will show the factorization of $u = v \circ \phi$, where $\phi: B_1 \to \phi(B_1)$ is quasiconformal and $v: \phi(B_1) \to \R^n$ is harmonic. This is reminescent of the analogous transformation in the foundational works of J. Douglas \cite{DOUG} and T. Rado \cite{RADO}. Define $U \doteq \phi(B_1)$, which is open since $\phi$ is a homeomorphism. Consider now the sets
\begin{equation}\label{E1}
E_1\doteq \{x \in U: |Dv|(x) = 0\},\quad \Omega \doteq U \setminus E_1.
\end{equation}
Since $v$ is harmonic and $U \subset \R^2$, either $E_1 = U$ or $E_1$ is discrete. Indeed, let $v_1$ be the first component of $v$. The set $\{z \in U: Dv_1(z) = 0\}$ is discrete, since locally $v_1$ is the real part of a holomorphic function $f = f(z)$, and $Dv_1(z) = 0$ implies $f'(z) = 0$. This shows that $E_1$ is either the whole $U$ or it is discrete, as wanted. We will only treat the second case, the first being trivial. Since $\phi$ is a homeomorphism, $S \doteq \phi^{-1}(E_1)$ is again discrete. Recalling the notation introduced in \eqref{eq:expA}, we can split
\[
\Omega = Z \cup O = \{y \in \Omega: \forall a,b, \det(Dv^{ab})(y) =0\}\cup \{y \in \Omega: \exists a,b \text{ such that } |\det(Dv^{ab})|(y) > 0\}.
\]
Of course, $Z$ is closed and $O$ is open relatively to $\Omega$. We will show separately that, if $\phi(x_0) \in Z$ or $\phi(x_0) \in O$, then there exists $r > 0$ such that $u \in C^{1,\alpha}(B_{r}(x_0),\R^n)$. This will conclude the proof. Let us analyze the two situations separately.
\\
\\
\fbox{Case 1: $\phi(x_0) \in Z$.} At $x_0$ we have $\det(Dv^{ab})(\phi(x_0)) = 0$ for all $1\le a < b \le n$ by definition of $Z$. Furthermore, since $x_0 \in \phi^{-1}(\Omega)$, $|Dv|(\phi(x_0)) \neq 0$. Hence, there exists a ball $B \doteq B_r(x_0)$ on which $\|\det(Dv^{ab})\circ\phi\|_{L^\infty(B)} \le\eps_1, \forall 1 \le a < b \le n$, and $|Dv|(\phi(x)) \ge c > 0$ for all $x \in B$. Here, $\eps_1 > 0$ is a small constant which will be chosen later. We start by noticing that $\phi$ is Lipschitz in $B_r(x_0)$. Indeed, since $\phi$ is quasiconformal, we can exploit Theorem \ref{thm:quas} to see that for almost all $x \in B_1$
\begin{equation}\label{philip}
k|Dv\circ \phi|(x)|D\phi|(x) \le |Dv\circ\phi(x)D\phi(x)| =|Du(x)| \le \Lip(u),
\end{equation}
where $k>0$ only depends on the quasiconformality constant of $\phi$. Combining this with the fact that $|Dv|(\phi(x)) \ge c$, we readily obtain that $\phi \in \Lip(B_r(x_0))$. Therefore, for all $x \in B_r(x_0)$ and for all $1 \le a < b \le n$,
\begin{equation}\label{CEPS}
|\det(Du^{ab})|(x) = |\det(Dv^{ab})|\circ\phi(x)|\det(D\phi)|(x) \le C\eps_1,
\end{equation}
for some positive constant $C$ only depending on $|Dv|\circ \phi(x_0)$, the quasiconformality constant $K$ of $\phi$ and the Lipschitz constant $\Lambda$ of $u$. Proposition \ref{prop:regsmalldet} shows that, given $\Lambda > 0$, there exists $\eps_2 = \eps_2(\Lambda) >0$ such that every solution $u$ to $\dv(D\A(Du)) = 0$ with $\|Du\|_{L^\infty(B_r(x_0))}\le \Lambda$ and $\|\det(Du^{ab})\|_{L^\infty(B_r(x_0))} \le \eps_2$ for all $1\le a < b \le n$ belongs to $C^{1,\alpha}$ in $B_{\frac{r}{2}}(x_0)$. This Proposition allows us to fix $\eps_1 \doteq \frac{\eps_2}{C}$, where $C$ is the constant appearing in \eqref{CEPS} and find $r = r(x_0,K,\Lambda)$ consequently.
\\
\\
\fbox{Case 2: $\phi(x_0) \in O$.} By definition of $O$, we can find a ball $D \doteq B_r(x_0)$ and $1 \le a < b \le n$ on which $|\det(Dv^{ab})|(\phi(x_0)) \ge c > 0$. We can, in particular, assume that 
\begin{equation}\label{vdiffeo}
\left(\begin{array}{c}v^a\\v^b\end{array}\right) \text{ is a diffeomorphism on $\phi(D)$.}
\end{equation}
Assume for the moment $a = 1, b = 2$. By continuity $\det(Dv^{12})\circ\phi$ has a sign on $D$, which we assume for the moment to be positive. It is convenient to split the maps under consideration in components: let $f: B_1 \to \R^2, h:B_1 \to \R^{n-2}, g: U \to \R^2, \tilde v: U \to \R^{n-2}$ be such that
\begin{equation}\label{splitu}
u = \left(\begin{array}{cc} f \\ h\end{array}\right) \text{ and } v = \left(\begin{array}{cc} g \\ \tilde v\end{array}\right).
\end{equation}
By \eqref{vdiffeo}, $g$ is a quasiconformal map on $\phi(D)$ and hence the composition $g\circ\phi = f$ is quasiconformal on $D$, according to Theorem \ref{thm:quas}. According to the same theorem, the inverse $w \doteq f^{-1}$ is quasiconformal as well. In addition, let us set
\begin{equation}\label{splitpsi}
\psi \doteq \tilde v \circ g^{-1} \in C^\infty(f(D), \R^{n-2}).
\end{equation}
In Lemma \ref{lem:outtoin}, we will show that
\begin{equation}\label{Bredu}
\dv(B(Dw|D\psi)) = 0 \text{ in the sense of distributions in $E \doteq f(D)$},
\end{equation}
where we used notation \eqref{X|Y}. Theorem \ref{teo:regin} shows that a quasiconformal solution to \eqref{Bredu} with Jacobian bounded below belongs to $C^{1,\alpha}(E',\R^2)$, where $E' \doteq f(B_{\frac{r}{2}}(x_0))$. To see that $\det(Dw)$ is bounded below, simply write
\begin{equation}\label{detDwbdd}
\det(Dw)(z) = \frac{1}{\det(Df)\circ w(z)}, \text{ for all $z \in E$}
\end{equation}
and use the fact that $u$, and hence $f$, is Lipschitz. Therefore the classical Inverse Function Theorem yields $f \in C^{1,\alpha}(B_{\frac{r}{2}}(x_0))$. Furthermore
\begin{equation}\label{splitu2}
h = \tilde v \circ \phi = \tilde v \circ g^{-1} \circ g\circ \phi =  \psi \circ f,
\end{equation}
and hence $h$ is as smooth as $f$ is. This implies the same regularity on $u$, and concludes the proof. In case $a \neq 1$, $b \neq 2$, or $\det(Dv^{ab})\circ \phi$ has the undesired sign on $D$, then we could simply employ the above reasoning on
\[
\tilde u \doteq Mu,
\]
where $M \in O(n)$ is the matrix defined as follows. First, consider $M_1 \in O(n)$ to be the matrix that swaps the $a$-th and $b$-th components of an $\R^n$ vector with the first and second components respectively. Next, let $M_2 \in O(n)$ be the matrix that acts as
\[
M_2e_i \doteq e_i,\quad \forall i \neq  1,\quad M_2e_1 \doteq \sigma e_1
\]
Here, $\sigma \in \{-1,1\}$ is suitably chosen to have the required positivity of the determinant. Finally, set $M \doteq M_2M_1$. Due to Lemma \ref{lem:LA}\eqref{DAorth}, we have that $D\A(D\tilde u) = MD\A(Du)$ pointwise a.e., and hence $\dv(D\A(D\tilde u)) = 0$. The above reasoning then yields the required regularity for $\tilde u$ and hence for $u$. This concludes the proof of this case, and thus of the present Proposition.
\end{proof}
\section{From outer to inner variations}\label{s:OUTTO}

In this section we show the factorization of $u$ as a harmonic map and a quasiconformal map in Lemma \ref{lem:qc} and the fact that the inverse of a solution to the outer variation system solves the inner variations system in Lemma \ref{lem:outtoin}.

\begin{lemma}\label{lem:qc}
Let $B_1 \subset \R^2$ and $u \in \Lip(B_1,\R^n)$. Suppose $u$ solves
\[
\dv(D\A(Du)) = 0 \text{ in the sense of distributions on $B_1$}.
\]
Then, there exists a quasiconformal map $\phi: B_1 \to \phi(B_1)$ such that
\[
v \doteq u\circ \phi^{-1}
\]
is harmonic in $U \doteq \phi(B_1)$.
\end{lemma}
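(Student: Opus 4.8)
The claim is that a Lipschitz weak solution $u$ to $\dv(D\A(Du)) = 0$ on $B_1 \subset \R^2$ factors as $u = v \circ \phi$ with $\phi$ quasiconformal and $v$ harmonic. The natural strategy is to construct $\phi$ as the solution of a Beltrami-type equation built from the metric $g = \id_2 + Du^TDu$. Concretely, I would first observe that the outer variation equation $\dv(D\A(Du)) = 0$ is, by \eqref{shortexpDA} and \eqref{Basg}, the same as $\dv(\sqrt{\det g}\,g^{-1}Du) = 0$, i.e. each component $u^l$ is a critical point of the Dirichlet energy in the metric $g$; equivalently $u^l$ is (weakly) harmonic with respect to the Laplace--Beltrami operator of $g$ on $B_1$. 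The classical fact — going back to the measurable Riemann mapping theorem and used in the Douglas--Rado theory cited in the proof of Proposition \ref{simpMT1} — is that any bounded measurable conformal structure on a planar domain admits isothermal coordinates: there is a quasiconformal homeomorphism $\phi: B_1 \to U$ whose inverse pulls $g$ back to a multiple of the identity. Since the entries of $g$ are bounded (because $u$ is Lipschitz) and $g \geq \id_2$, the associated Beltrami coefficient $\mu$ has $\|\mu\|_{L^\infty} < 1$, so the measurable Riemann mapping theorem (Ahlfors--Bers, or \cite{BI}) provides such a $\phi \in W^{1,2}_{\loc}$, a quasiconformal homeomorphism.

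With $\phi$ in hand, set $v \doteq u \circ \phi^{-1}$ on $U = \phi(B_1)$. By Theorem \ref{thm:quas}\eqref{quasma1}, $\phi^{-1}$ is quasiconformal, and by the chain rule \eqref{quasma2} (applied componentwise, legitimate since $u$ is Lipschitz) $v \in W^{1,2}(U,\R^n)$ with $Dv \circ \phi = Du \,(D\phi)^{-1}$ a.e. The point of the isothermal choice of $\phi$ is that the pushforward of the equation $\dv(\sqrt{\det g}\,g^{-1}Du^l) = 0$ under a conformal change of coordinates becomes the ordinary Laplace equation $\Delta v^l = 0$ in $U$, in the weak (distributional) sense. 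This is the standard conformal invariance of the Dirichlet energy in dimension two: the energy $\int \sqrt{\det g}\,g^{ij}\partial_i u^l \partial_j u^l\,dx$ equals $\int |Dv^l|^2\,dy$ after the change of variables $y = \phi(x)$, precisely because $\phi$ is chosen so that $D\phi^T g^{-1} D\phi \sqrt{\det g} = \id_2$ up to the conformal factor which cancels in $2$D. Hence $v$ is a critical point of $\int_U |Dv|^2$, i.e. weakly harmonic, and being in $W^{1,2}$ it is then smooth and harmonic in the classical sense by Weyl's lemma. I would carry this out either by directly verifying the change-of-variables identity for the weak formulation (testing against $\eta \circ \phi$ for $\eta \in C^\infty_c(U)$ and using the chain rule for quasiconformal maps) or by invoking the isothermal-coordinates lemma in the form already used in the literature.

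\textbf{Main obstacle.} The delicate point is not the existence of $\phi$ — that is the measurable Riemann mapping theorem and is completely standard — but justifying the change of variables in the \emph{weak} formulation, since $u$ is only Lipschitz and $\phi$ only $W^{1,2}$, so a priori $v \in W^{1,2}$ has no better regularity and the coefficient matrix $\sqrt{\det g}\,g^{-1}$ is merely bounded measurable. One must argue that for every test function $\eta \in C^\infty_c(U)$, the function $\eta \circ \phi$ is an admissible test function for the equation on $B_1$ (it lies in $W^{1,2}_0$ with compact support, by the chain rule of Theorem \ref{thm:quas}\eqref{quasma2} and properness of the homeomorphism $\phi$), and then that the resulting integral identity transforms, via the area formula for quasiconformal maps and the pointwise relation $D\phi^T (\sqrt{\det g}\,g^{-1}) D\phi = \det(D\phi)\,\id_2$ a.e., into $\int_U (Dv^l, D\eta)\,dy = 0$. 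The key algebraic identity $D\phi^T g^{-1} D\phi = \det(D\phi)(\det g)^{-1/2}\id_2$ a.e. is exactly the defining property of isothermal coordinates and is what must be extracted from the construction of $\phi$; once it is available the rest is bookkeeping with the chain rule and the Jacobian change of variables, both licensed by Theorem \ref{thm:quas}.
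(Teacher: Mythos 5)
Your proposal is correct and follows essentially the same route as the paper: build $\phi$ via the measurable Riemann mapping theorem from the Beltrami coefficient of $g = \id_2 + Du^TDu$ (with $\|\mu\|_\infty < 1$ by the Lipschitz bound), express outer criticality as $\dv(\sqrt{\det g}\,g^{-1}Du)=0$, and transfer to the Laplace equation for $v = u\circ\phi^{-1}$ by testing against $\eta\circ\phi$, using Theorem~\ref{thm:quas} for the chain rule, the quasiconformal change of variables, and the isothermal identity $g = \rho\,D\phi^T D\phi$. You also correctly identify the genuinely delicate step — the admissibility of $\eta\circ\phi$ as a test function in the weak formulation — which is exactly what the paper's computation handles.
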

\begin{proof}
Define the metric tensor $g \doteq \id + Du^TDu = \left(\begin{array}{cc} E& F\\ F & G\end{array}\right)$. The Lipschitz assumption on $u$ yields $\id \le g \le \Lambda \id$ at a.e. point of $B_1$ in the sense of quadratic forms. Using complex notation, we set
\[
\mu(z) \doteq \frac{E-G+2iF}{E+G+2{\sqrt {EG-F^{2}}}}.
\]
Introduce the \emph{Wirtinger derivatives} for $f:\Omega \subset \R^2\to\R^2$:
\[
f_z \doteq \frac{1}{2}[(\partial_1f_1 + \partial_2f_2) + i(\partial_1f_2 - \partial_2f_1)]\; \text{ and }\; f_{\bar z} \doteq \frac{1}{2}[(\partial_1f_1 - \partial_2f_2) + i(\partial_1f_2 + \partial_2f_1)].
\]
By direct computations, one verifies that
\[
|\mu|(z) \le k = k(\Lambda) < 1 \text{ for a.e. }z \in B_1.
\]
Therefore, after extending $\mu$ to $0$ outside $B_1$, we can invoke \cite[Theorem 5.3.2]{AIM} to find a unique quasiconformal map $\phi \in W^{1,2}_{\loc}(\R^2,\R^2)$ fulfilling a.e. in $\R^2$
\begin{equation}\label{phimu}
\phi_{\bar z} = \mu(z)\phi_z,
\end{equation}
and the normalization condition $\phi(z) = z + O\left(z^{-1}\right)$ for large $z$. Now \eqref{phimu} implies that $\det(D\phi)(z) \ge 0$ and hence $\det(D\phi)(z) > 0$ a.e. in $\R^2$, since the Jacobian of a quasiconformal map is non-zero a.e., \cite[Theorem 16.10.1]{IWAMA}. Set $U \doteq \phi(B_1)$ as in the statement of the ongoing lemma, and note that $U$ is open since $\phi$ is a homeomorphism. Passing to real notation, we find that \eqref{phimu} implies a.e. in $B_1$
\begin{equation}\label{gconf}
g = \rho D\phi^TD\phi, \quad \text{ with } \rho \doteq \frac{\sqrt{\det(g)}}{\det(D\phi)}.
\end{equation}
For details, see \cite[Theorem 10.1.1]{AIM}. By the first line of \eqref{sys0}, we know that criticality for outer variations corresponds to asking
\begin{equation}\label{ucrit}
\int_{B_1}\left\langle\sqrt{\det(g)}Dug^{-1},D\psi\right\rangle dx = 0,\quad \forall \psi \in C^\infty_c(B_1,\R^n).
\end{equation}
By approximation, the same holds if $\psi \in W^{1,2}_0(B_1,\R^n)$. We are now in position to show that $v \doteq u \circ \phi^{-1}$ is harmonic on $U$. In the subsequent computations, we will be using Theorem \ref{thm:quas} freely. Fix any $\eta \in C^\infty_c(U,\R^2)$. Then:
\begin{align*}
\int_{U}\langle Dv,D\eta\rangle dy &= \int_{U}\langle Du\circ \phi^{-1}(D\phi)^{-1}\circ \phi^{-1},D\eta\rangle dy \\
&= \int_{B_1}\det(D\phi)\langle Du(D\phi)^{-1},D\eta\circ \phi\rangle dx = \int_{B_1}\det(D\phi)\langle Du(D\phi)^{-1},D(\eta\circ \phi)(D\phi)^{-1}\rangle dx\\
& =\int_{B_1}\det(D\phi)\langle Du(D\phi)^{-1}(D\phi)^{-T},D(\eta\circ \phi)\rangle dx \overset{\eqref{gconf}}{=} \int_{B_1}\sqrt{\det(g)}\langle Dug^{-1},D(\eta\circ \phi)\rangle dx,
\end{align*}
where in the second line we used the change of variables $y = \phi(x)$ that is justified by, for instance, \cite[Lemma 2.4]{INV}. Since $\phi$ is a homeomorphism, $\phi(\partial B_1) \subset \partial U$. Thus, $\eta \circ \phi = 0$ on $\partial B_1$. Hence, $\eta \circ \phi \in W^{1,2}_0(B_1,\R^n)$ and \eqref{ucrit} concludes the proof.
\end{proof}

\begin{lemma}\label{lem:outtoin}
Let $B_1 \subset \R^2$ and $u \in \Lip(B_1,\R^n)$. Assume for $u$ a splitting as in \eqref{splitu}-\eqref{splitpsi}-\eqref{splitu2}, i.e.
\begin{equation}\label{splitulem}
u = \left(\begin{array}{cc} f \\ \psi\circ f\end{array}\right),
\end{equation}
for a quasiconformal $f: B_1 \to \R^2$ and a smooth $\psi: f(B_1) \to \R^{n-2}$. Assume in addition that $u$ solves
\begin{equation}\label{eq:out1}
\dv(D\A(Du)) = 0 \text{ in the sense of distributions on $B_1$}.
\end{equation}
Then, using notation \eqref{X|Y}, $w \doteq f^{-1}$ solves
\[
\dv(B(Dw|D\psi)) = 0 \text{ in the sense of distributions on $f(B_1)$}.
\]
\end{lemma}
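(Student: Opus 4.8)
The plan is to verify the weak formulation directly: for an arbitrary test function $\zeta \in C_c^\infty(f(B_1),\R^2)$, I would push the integral $\int_{f(B_1)}\langle B(Dw|D\psi),D\zeta\rangle\,dy$ back to $B_1$ via the change of variables $y=f(x)$ and recognize the result as the outer variation of $u$ in its first two components, which vanishes by hypothesis.

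First I would record the pointwise identities. Writing $X\doteq Df$, the chain rule (Theorem \ref{thm:quas}\eqref{quasma2}) gives $Du=\binom{X}{D\psi(f)X}$ a.e., and $\det X>0$ a.e. because $f$ is quasiconformal; moreover $Dw(y)=X(w(y))^{-1}$ a.e. by Theorem \ref{thm:quas}\eqref{quasma1}. The key algebraic step is then Lemma \ref{lem:LA}\eqref{xoxo} applied with the matrix $X(x)$ and $Y=D\psi(f(x))$: since $D\psi(f(x))X(x)=D(\psi\circ f)(x)$, the bottom block of the argument of $B$ becomes $D(\psi\circ f)$, so that for $x=w(y)$
\[
B(Dw|D\psi)(y)=B\bigl(X(x)^{-1}\,\big|\,D\psi(f(x))\bigr)=\frac{X(x)\,B(Du(x))\,X(x)^T}{\det X(x)}.
\]
Since $u$ is Lipschitz and $\A\ge 1$, the right-hand side is bounded, so by the area formula $B(Dw|D\psi)\in L^1_{\loc}(f(B_1))$ and the statement is meaningful.

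Next I would set $\Phi\doteq\zeta\circ f$; this lies in $W^{1,2}_0(B_1,\R^2)$ by Theorem \ref{thm:quas}\eqref{quasma2}, with compact support $w(\spt\zeta)\subset B_1$ (as $f$ is a homeomorphism), and $D\zeta(f(x))=D\Phi(x)X(x)^{-1}$ a.e. Performing the change of variables $y=f(x)$ (valid for the quasiconformal map $f$, exactly as invoked in the proof of Lemma \ref{lem:qc}, with $\det X>0$) and inserting the identity above, the factors $\det X$ cancel and one is left with $\int_{B_1}\langle XB(Du)X^T,\,D\Phi\,X^{-1}\rangle\,dx$. Using the symmetry of $B(Du)$ (Lemma \ref{lem:LA}\eqref{signs}) and cyclicity of the trace, this equals $\int_{B_1}\langle XB(Du),\,D\Phi\rangle\,dx$. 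Finally, by \eqref{shortexpDA} the matrix $XB(Du)=Df\,B(Du)$ is precisely the block formed by the first two rows of $D\A(Du)=Du\,B(Du)$, so the integral equals $\int_{B_1}\langle D\A(Du),\,D\tilde\Phi\rangle\,dx$ with $\tilde\Phi\doteq\binom{\Phi}{0}\in W^{1,2}_0(B_1,\R^n)$; this vanishes by the outer variation equation \eqref{eq:out1}, extended to $W^{1,2}_0$ test functions by density using $D\A(Du)\in L^\infty$ (as already done in the proof of Lemma \ref{lem:qc}). Since $\zeta$ is arbitrary, this gives $\dv(B(Dw|D\psi))=0$ in $f(B_1)$.

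I expect the only genuine obstacle to be bookkeeping rather than conceptual: one must make sure that all the a.e. identities (the chain rule for $\psi\circ f$ and $\zeta\circ f$, the inverse formula $Dw=(Df)^{-1}\circ w$, and the change of variables with Jacobian $\det Df$) are legitimate for maps that are merely $W^{1,2}$-quasiconformal — this is exactly what Theorem \ref{thm:quas} and the change-of-variables lemma used in Lemma \ref{lem:qc} provide — and that $\zeta\circ f$ really is an admissible compactly supported Sobolev test function. The single conceptual ingredient is the identity of Lemma \ref{lem:LA}\eqref{xoxo} combined with \eqref{shortexpDA}, which is what converts the ``inner-type'' divergence of $B(Dw|D\psi)$ into the ``outer-type'' divergence of the first two components of $D\A(Du)$, and this is the only place where the specific structure of the area integrand is used.
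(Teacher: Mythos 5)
Your proof is correct and follows essentially the same route as the paper's: change variables $y=f(x)$, apply Lemma \ref{lem:LA}\eqref{xoxo} to rewrite $B(Dw|D\psi)\circ f=\det(Df)^{-1}\,Df\,B(Du)\,Df^T$, cancel the Jacobian, and recognize $Df\,B(Du)$ as the first two rows of $D\A(Du)$ via \eqref{shortexpDA}, so that the test integral becomes an admissible outer variation. The one small imprecision is the assertion that $\det(Df)^{-1}Df\,B(Du)\,Df^T$ is bounded ``since $u$ is Lipschitz and $\A\ge 1$'' — that reasoning does not control the regime where $\det(Df)$ is small — but your area-formula computation already gives the needed $L^1_{\loc}$ integrability, and the paper instead obtains $B(Dw|D\psi)\in L^\infty$ directly from the quasiconformality of $w$ via Lemma \ref{lem:LA}\eqref{Bdd}.
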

\begin{proof}
Let $V \doteq f(B_1)$. We start by noticing that, since $w$ is quasiconformal, $B(Dw|D\psi) \in L^\infty(V)$ by Lemma \ref{lem:LA}\eqref{Bdd}. Notice that
\[
Dw(y) = (Df)^{-1}\circ w(y)
\]
by Theorem \ref{thm:quas}. By Lemma \ref{lem:LA}\eqref{xoxo}, for a.e. $y \in V$,
\begin{equation}\label{Bwu}
B(Dw|D\psi)(y) =  \frac{Df(w(y))B(Df(w(y))|D\psi(y) Df(w(y)))Df(w(y))^T}{\det(Df(w(y)))}.
\end{equation}
Therefore, given any $\eta \in C^\infty_c(V,\R^2)$, we have
\begin{equation}\label{Bwu2}
\begin{split}
\int_{V}\langle B(Dw|D\psi),D\eta\rangle dy &\overset{\eqref{Bwu}}{=} \int_{V}\left\langle\frac{Df(w(y))B(Df(w(y))|D\psi(y) Df(w(y)))Df(w(y))^T}{\det(Df(w(y)))}, D\eta\right\rangle dy \\
&= \int_{B_1}\langle Df(x)B(Df(x)|D\psi(f(x)) Df(x))Df(x)^T, D\eta \circ f\rangle dx \\
& = \int_{B_1}\langle DfB(Df|D(\psi\circ f))Df^T, D\eta \circ f\rangle dx\\
& \overset{\eqref{X|Y}-\eqref{splitulem}}{=}\int_{B_1}\langle DfB(Du)Df^T, D\eta \circ f\rangle dx\\
&=\int_{B_1}\langle DfB(Du), D(\eta \circ f)\rangle dx.
\end{split}
\end{equation}
We notice that $DfB(Du)$ is obtained considering the first two rows of $D\A(Du) =DuB(Du)$, compare \eqref{shortexpDA}, and thus $DfB(Du)$ is divergence-free since $D\A(Du)$ was. With a reasoning analogous to the one used in Lemma \ref{lem:qc}, we conclude that the last expression in \eqref{Bwu2} equals $0$, as wanted.
\end{proof}

\section{Regularity for solutions with small determinant}\label{s:SMALLDET}

This section is devoted to proving Proposition \ref{prop:regsmalldet}, which asserts the regularity of Lipschitz solutions with small determinant to the outer variations equations of the area. The idea behind it is that the area functional becomes uniformly convex close to matrices with small determinant, as can heuristically be seen by the fact that
\[
\A(X) = \sqrt{1 + |X|^2},\quad \text{ for all singular $X \in \R^{n\times 2}$}.
\]

\begin{rem}
A similar philosophy appears in \cite{WANG,WAN1}, where existence and regularity results for the minimal surface system are obtained in general dimension and codimension when the area-decreasing condition (products of pairs of distinct principal values of $Du$ are all at most one in absolute value) is satisfied.
\end{rem}

\begin{prop}\label{prop:regsmalldet}
Let $u \in \Lip(B_1,\R^n)$ be such that $\|Du\|_{\infty} \le \Lambda$ and
\begin{equation}\label{eq:out2}
\dv(D\A(Du)) = 0 \text{ in the sense of distributions on $B_1$}.
\end{equation}
There exists a constant $\eps_2 = \eps_2(\Lambda) > 0$ such that if $\|\det(Du^{ab})\|_\infty \le \eps_2$ for all $1\le a \le b \le n$, then $u \in C^{1,\alpha}(B_{1/2},\R^2)$.
\end{prop}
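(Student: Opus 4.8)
The plan is to exploit the heuristic principle stated just above the proposition: near the set of rank-one (singular) matrices, the area integrand $\A$ is uniformly convex. More precisely, I would first show that $D^2\A(Z)$ is uniformly positive definite on the set $\{|Z|\le\Lambda,\ |\det(Z^{ab})|\le\eps_2\text{ for all }a\le b\}$ once $\eps_2=\eps_2(\Lambda)$ is small enough. This is a pointwise linear-algebra statement: from the explicit formula \eqref{eq:expA}, the "bad" (non-convex) directions of $\A$ are precisely those that change the $2\times 2$ minors $\det(Z^{ab})$, and the second-order contribution of those minors to $\A$ is controlled by $\sum|\det(Z^{ab})|$, which is $\le \binom{n+1}{2}\eps_2$ on our set. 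Since $\A(Z)=\sqrt{1+|Z|^2}$ exactly when all minors vanish, and the Hessian of $Z\mapsto\sqrt{1+|Z|^2}$ on a bounded set $\{|Z|\le\Lambda\}$ is uniformly elliptic (with constants depending only on $\Lambda$), a compactness/continuity argument gives a threshold $\eps_2(\Lambda)$ below which $\langle D^2\A(Z)\xi,\xi\rangle\ge \lambda(\Lambda)|\xi|^2$ for all $\xi\in\R^{n\times 2}$.

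With uniform ellipticity in hand, the equation $\dv(D\A(Du))=0$ becomes, after the standard difference-quotient argument, a uniformly elliptic linear system for the first derivatives of $u$: testing the equation with translates and using $\A$ smooth with $D^2\A$ bounded and $\ge\lambda\,\Id$ on the relevant matrix range (which is stable under the flow since $\|Du\|_\infty\le\Lambda$ and $\|\det(Du^{ab})\|_\infty\le\eps_2$ are preserved), one obtains $u\in W^{2,2}_{\loc}$ with interior estimates. Then each partial derivative $\partial_k u$ is a $W^{1,2}$ weak solution of the linearized system
\[
\partial_i\!\left(A^{ij}_{lm}(x)\,\partial_j \partial_k u^m\right)=0,\qquad A^{ij}_{lm}(x)\doteq \partial_{Z^l_i}\partial_{Z^m_j}\A(Du(x)),
\]
which has bounded measurable coefficients satisfying the Legendre condition with constants $\lambda(\Lambda),\Lambda$. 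Here, crucially, the domain is two-dimensional: by the planar regularity theory for linear elliptic systems with bounded measurable coefficients (the $m=2$ case of Morrey's theorem — solutions of scalar-type estimates combined with Widman's hole-filling, or directly Morrey's result that $W^{1,2}$ solutions of uniformly elliptic systems in two dimensions are Hölder continuous), we conclude $\partial_k u\in C^{0,\alpha}_{\loc}$ for some $\alpha=\alpha(\lambda(\Lambda),\Lambda)>0$, i.e. $u\in C^{1,\alpha}(B_{1/2},\R^n)$.

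The main obstacle, and the step I would spend the most care on, is the linear-algebra estimate for $D^2\A$: one must check that the only obstruction to convexity of $\A$ comes from the minors $\det(Z^{ab})$ and that their second-variation contribution is genuinely $O(\sum|\det(Z^{ab})|)$ uniformly on $\{|Z|\le\Lambda\}$, so that it can be absorbed into the uniform convexity of $\sqrt{1+|Z|^2}$. A clean way to organize this is to write $\A(Z)^2=1+|Z|^2+\sum_{a\le b}\det(Z^{ab})^2$ and differentiate twice, noting that each $\det(Z^{ab})^2$ contributes a term quadratic in $\cof(Z^{ab})$ (bounded on our set) multiplied by the scalar $\det(Z^{ab})$ (small), plus a term $\langle C_{ab}(Z),\xi\rangle^2\ge 0$ which has the good sign; then pass from the Hessian of $\A^2$ to that of $\A$ using $\A\ge 1$ and $|D\A|$ bounded. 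A secondary technical point is making sure the matrix range on which we need ellipticity is preserved along the difference-quotient/mollification arguments and contains $Du(x)$ for a.e. $x$; this is immediate from the hypotheses $\|Du\|_\infty\le\Lambda$, $\|\det(Du^{ab})\|_\infty\le\eps_2$, together with convexity of $\{|Z|\le\Lambda\}$ and the fact that the relevant estimates only require the hypotheses to hold a.e. Everything after the ellipticity estimate is standard De Giorgi–Nash–Moser-type theory in its two-dimensional (systems) incarnation, plus the bootstrap already cited in Lemma \ref{simpMT}.
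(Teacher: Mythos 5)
Your plan of establishing a pointwise Hessian bound $\langle D^2\A(Z)\xi,\xi\rangle\ge\lambda(\Lambda)|\xi|^2$ on the set $S_\Lambda\doteq\{|Z|\le\Lambda,\ |\det(Z^{ab})|\le\eps_2\ \forall a\le b\}$ is sound in spirit and corresponds to the paper's inequality \eqref{areaconv0diff} (and, heuristically, to the remark that $\A$ touches $\sqrt{1+|Z|^2}$ from above on rank-one matrices). The gap comes at the next step, where you claim the difference-quotient argument is then ``standard''. What the Caccioppoli/difference-quotient argument really requires is the \emph{two-point} strict monotonicity
\[
\langle D\A(X)-D\A(Y),\,X-Y\rangle\ \ge\ \lambda|X-Y|^2
\qquad\text{for all } X=Du(x+h),\ Y=Du(x),
\]
and this does \emph{not} follow from the pointwise Hessian bound on $S_\Lambda$, because $S_\Lambda$ is not convex: the segment from $X$ to $Y$ can leave $S_\Lambda$ (for instance, the midpoint of two rank-one matrices with transversal ranges and kernels typically has a $2\times2$ minor of order $\Lambda^2$), and $\A$ is not convex globally (otherwise the proposition would be trivial and the Lawson-Osserman cone could not exist). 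Since $Du$ is merely $L^\infty$ one cannot argue that, for $|h|$ small, $Du(x+h)$ and $Du(x)$ are close at a.e.\ $x$; the two-point inequality must be proved on the full set $S_\Lambda$.

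The paper closes this gap by proving the two-point inequality \eqref{areaconv} directly. The key structural observation it uses, and which your proposal does not exploit, is that on \emph{exactly} rank-one matrices $D\A$ coincides with $D\sqrt{1+|\cdot|^2}$, and the latter function is convex on all of $\R^{n\times2}$: so for two rank-one matrices $X,Y$ in $\{|Z|\le\Lambda\}$ one integrates $D^2\sqrt{1+|\cdot|^2}$ along the segment, which stays in the convex ball $\{|Z|\le\Lambda\}$ even though it leaves the rank-one set. This gives \eqref{areaconv0}. A compactness/contradiction argument then extends the inequality to nearly-rank-one pairs: in the limit the two matrices are either distinct rank-one matrices (handled by \eqref{areaconv0}) or coincide (handled by the infinitesimal Hessian bound \eqref{areaconv0diff}, which is essentially what you prove). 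You can repair your proposal either by proving the two-point inequality along these lines, or by constructing a \emph{globally} uniformly convex integrand $\tilde\A$ on $\R^{n\times2}$ that agrees with $\A$ on a neighborhood of the rank-one matrices of norm $\le\Lambda$ (the paper's remark after the proposition asserts such a function exists), since then $\dv(D\tilde\A(Du))=0$ and the rest of your argument (difference quotients, $W^{2,2}_{\loc}$, two-dimensional elliptic systems theory) goes through unchanged. The final regularity step you sketch is fine and matches what the paper does by citation.
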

\begin{proof}
Let us start by remarking that, if $|X|,|Y| \le \Lambda$, $\rank(X) = \rank(Y) \le 1$ and $|Z| = 1$, then there exists $0 < \lambda = \lambda(\Lambda)$ such that
\begin{equation}\label{areaconv0}
\langle D\A(X) -D\A(Y),X-Y\rangle \ge 2\lambda|X-Y|^2
\end{equation}
and
\begin{equation}\label{areaconv0diff}
D^2\A(X)[Z,Z] = \lim_{t \to 0}\frac{\langle D\A(X + tZ) - D\A(X),Z\rangle}{t} \ge 2\lambda.
\end{equation}
Indeed, to show \eqref{areaconv0} we can exploit the explicit form of $D\A$ of \eqref{eq:expDA} to write
\[
D\A(X) = \frac{X}{\sqrt{1 + |X|^2}}, \quad D\A(Y) = \frac{Y}{\sqrt{1 + |Y|^2}},
\]
due to the fact that $\rank(X) = \rank(Y) \le 1$. Hence, \eqref{areaconv0} holds by the uniform convexity on compact sets of the function $f(X) = \sqrt{1 + |X|^2}$. To show \eqref{areaconv0diff}, define $g(t) \doteq \langle D\A(X + tZ),Z\rangle$. By \eqref{eq:expDA}:
\[
g(t) = \frac{\langle X + tZ,Z\rangle + \sum_{1\le a \le b \le n}\det(X^{ab} + t Z^{ab})\langle C_{ab}(X + tZ), Z\rangle}{\A(X + tZ)}.
\]
Therefore, using that $\rank(X) \le 1$, $|Z| = 1$ and \eqref{eq:expA}-\eqref{eq:expDA}, 
\begin{align*}
g'(0) &= \frac{1 + (\sum_{1\le a \le b \le n}\langle C_{ab}(X), Z\rangle)^2}{\A(X)} - \frac{\langle X,Z\rangle}{\A^2(X)}\langle D\A(X),Z\rangle \\
&= \frac{1 + (\sum_{1\le a \le b \le n}\langle C_{ab}(X), Z\rangle)^2}{\A(X)} - \frac{\langle X,Z\rangle^2}{\A^3(X)}\\
&\ge \frac{\A^2(X) - \langle X,Z\rangle^2}{\A^3(X)} \ge \frac{\A^2(X) -|X|^2}{\A^3(X)} = \frac{1}{\A^3(X)}.
\end{align*}
This shows \eqref{areaconv0diff}.
\\
\\
We exploit this observation to show that, given $\Lambda > 0$, there exists $0 < \eps_2 = \eps_2(\Lambda)$ such that if $|X|,|Y| \le \Lambda$ and $|\det(X^{ab})|,|\det(Y^{ab})| \le \eps_2$ for all $1 \leq a \leq b \leq n$, then
\begin{equation}\label{areaconv}
\langle D\A(X) -D\A(Y),X-Y\rangle \ge \lambda|X-Y|^2.
\end{equation}
If we manage to prove this inequality, then the proof of the interior regularity of $u$ is standard, see for instance the proof of \cite[Proposition 8.6]{GM} or the one of Theorem \ref{teo:regin} below. To show \eqref{areaconv}, we proceed by contradiction. If the statement of \eqref{areaconv} is false, then we find sequences $X_n,Y_n$ with $X_n\neq Y_n, |X_n|,|Y_n| \le \Lambda$ satisfying the following:
\begin{enumerate}[(i)]
\item $\det(X_n^{ab}),\det(Y_n^{ab}) \to 0, \quad \forall 1\le a \le b \le n$;\label{111}
\item $X_n \to X$, $Y_n \to Y$, $\frac{X_n - Y_n}{|X_n - Y_n|} \to Z$;\label{333}
\item $\langle D\A(X_n) -D\A(Y_n),X_n-Y_n\rangle \le \lambda|X_n-Y_n|^2$.\label{444}
\end{enumerate}
First, suppose $X \neq Y$. Then, in the limit \eqref{444} contradicts \eqref{areaconv0} due to \eqref{111}. We can then suppose $X = Y$. Dividing by $|X_n - Y_n|^2$ in \eqref{444} and sending that inequality to the limit, we find
\begin{equation}\label{contrafi}
D^2\A(X)[Z,Z] \le \lambda,
\end{equation}
Here $Z$ is the same matrix of $\eqref{333}$. Since $|Z| = 1$, $X$ is singular and $|X| \le \Lambda$, \eqref{contrafi} contradicts \eqref{areaconv0diff} and the proof is finished.\end{proof}

\begin{rem}
More generally, for any $m$ and  $n$ and any $R > 0$, there is a smooth, uniformly convex function on $\mathbb{R}^{n \times m}$ that agrees with $\A$ in a small neighborhood of the rank-one matrices of norm at most $R$. The key observation is that $\A$ touches the locally uniformly convex function $\sqrt{1 + |X|^2}$ from above on the rank-one matrices. 
As a consequence of this observation and general theory, Lipschitz weak solutions to the minimal surface system in a domain of dimension $m$ with gradient lying in a small neighborhood of the rank-one matrices are smooth away from a singular set with vanishing $(m-2)$-dimensional Hausdorff measure (see e.g. \cite[Theorem 9.1]{GM}). Arguing as in Lemma \ref{simpMT} we see that the solution is stationary, and since the assumption of closeness to rank-one matrices implies that the area-decreasing condition is satisfied, the solution is in fact smooth \cite[Theorem 4.1]{WANG}.
\end{rem}

\section{Regularity for solutions to inner variation equations}\label{s:SOLIN}

The main result of this section is Theorem \ref{teo:regin}, which shows regularity of quasiconformal solutions to the inner variation system. Its proof is based on the technical Proposition \ref{prop:regdiffinc}, and is close in spirit to \cite[Theorem 3]{SAP} and \cite[Theorem 6.5]{RT}.

\begin{prop}\label{prop:regdiffinc}
Let $X,Y \in \R^{2\times 2},M \in \R^{(n-2)\times 2}$. Assume the following properties for $K,\eps_3,L > 0$:
\begin{align}
&|X|^2 \le K\det(X), \;|Y|^2 \le K\det(Y), \label{XYqc}\\
&\det(X),\det(Y) \ge \eps_3>0,\label{eps3}\\
&|M| \le L.\label{BDD}
\end{align}
Then, there exists $C_1 > 0$ and $\delta > 0$ depending on $K, \eps_3, L$ such that
\begin{equation}\label{sptnull}
C_1\min\{|X|^2,|Y|^2\}|B(X|M)-B(Y|M)|^2 + \det(X-Y) \ge \delta|X-Y|^2.
\end{equation}
\end{prop}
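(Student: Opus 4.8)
The inequality \eqref{sptnull} is an infinitesimal ellipticity statement: it says that the map $B(\cdot|M)$, restricted to the quasiconformal cone $\{|X|^2\le K\det X\}$ with determinant bounded below, is ``elliptic'' in the sense that the combination of the $B$-discrepancy and the null-Lagrangian $\det(X-Y)$ controls $|X-Y|^2$. The plan is to reduce to the case $M = 0$ and then exploit the connection between $B$ and the Monge--Amp\`ere equation. First I would use Lemma \ref{lem:LA}\eqref{BsimpS}: writing $S = \sqrt{\id + M^TM}$, one has $B(X|M) = \det(S)S^{-1}B(XS^{-1}|0)S^{-1}$, and since $|M|\le L$ the matrix $S$ is uniformly bounded above and below, so $|B(X|M)-B(Y|M)|$ is comparable to $|B(XS^{-1}|0)-B(YS^{-1}|0)|$. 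Moreover $XS^{-1}, YS^{-1}$ still satisfy a quasiconformality bound and a determinant lower bound (with constants depending on $K,\eps_3,L$), and $\det(XS^{-1}-YS^{-1}) = \det(S)^{-1}\det(X-Y)$ with $\det S$ bounded. So after renaming it suffices to prove \eqref{sptnull} for $n = 2$, i.e. for $B = B(\cdot|0)$ acting on $2\times 2$ matrices in the quasiconformal cone with determinant bounded below.

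\textbf{The case $n=2$.} For $X \in \R^{2\times 2}$ with $\det X > 0$, recall from \eqref{eq:Bexp} and Lemma \ref{lem:LA}\eqref{signs} that $B(X)$ is symmetric, positive definite (since $B_{11},B_{22}>0$ and $\det B = 1$) with determinant $1$, and $B(X) = \A(X)^{-1}\big((1+|X|^2)\id - X^TX\big)$. The strategy is to argue by contradiction and compactness, exactly in the spirit of Proposition \ref{prop:regsmalldet}: if \eqref{sptnull} fails for every $\delta = 1/j$, we obtain sequences $X_j, Y_j$ in the (closed, by the determinant lower bound and the $\Lambda$-type upper bound implicit once one also normalizes — see below) admissible set with
\begin{equation*}
C_1\min\{|X_j|^2,|Y_j|^2\}\,|B(X_j)-B(Y_j)|^2 + \det(X_j - Y_j) \le \tfrac1j |X_j-Y_j|^2.
\end{equation*}
One subtlety: the admissible set is not compact because $|X|$ is not bounded above. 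To handle this I would first use the scaling $B(tX) = B(X) + (\text{lower order})$ — more precisely, the homogeneity structure: $B$ is invariant under $X\mapsto XM$, $M\in O(2)$, and one checks $B(\lambda X)\to \id$ as $\lambda\to\infty$ is \emph{not} quite what we want; rather, the natural move is to normalize $|X_j| = 1$ after dividing the whole inequality by a suitable power. Indeed all three terms in \eqref{sptnull} are homogeneous: under $X\mapsto \lambda X$, $Y\mapsto\lambda Y$, the term $\det(X-Y)$ and $|X-Y|^2$ scale like $\lambda^2$, while $\min\{|X|^2,|Y|^2\}|B(X|0)-B(Y|0)|^2$ — here $B$ is $0$-homogeneous — also scales like $\lambda^2$. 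Hence \eqref{sptnull} is scaling invariant, and we may normalize $\max\{|X_j|,|Y_j|\} = 1$; combined with $\det \ge \eps_3$ (which forces $|X_j|,|Y_j|\ge c(\eps_3)>0$) the admissible matrices now lie in a compact set, and we extract limits $X_j\to X$, $Y_j\to Y$, $(X_j-Y_j)/|X_j-Y_j|\to Z$ with $|Z|=1$.

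\textbf{The contradiction.} If $X\ne Y$: since $B$ is continuous on $\{\det > 0\}$ and the limit matrices still satisfy $\det\ge\eps_3$, passing to the limit gives $\det(X-Y)\le 0$; but $X,Y$ both lie in the quasiconformal cone $\{|A|^2\le K\det A\}$, which is a convex cone of matrices with positive determinant on which $A\mapsto \det A$ is ``monotone'' — concretely, for $2\times 2$ matrices $\det(X-Y) = \det X + \det Y - (X^1\wedge Y^2 + Y^1\wedge X^2)$, and the quasiconformality of both $X$ and $Y$ (which pins down the sign of all the relevant $2\times 2$ minors, cf. the sign analysis available from \eqref{eq:Bexp}) forces $\det(X-Y) > 0$ when $X\ne Y$ — this is the key quasiconformal rigidity input and I expect it to be \emph{the main obstacle}, requiring a careful bilinear computation showing that on the cone $\mathcal C_K = \{|A|^2\le K\det A\}$ one has $\det(A-B)\ge c(K)|A-B|^2$ whenever — hmm, that would already prove the statement with $C_1 = 0$, which is false, so in fact the correct statement must be that $\det(A-B)$ can be negative on $\mathcal C_K$ and the $B$-term is genuinely needed. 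So instead: if $X = Y$, divide by $|X_j-Y_j|^2$ and pass to the limit to get $C_1|X|^2\,|DB(X|0)[Z]|^2 + D\!\det(X)[Z,Z]\le 0$ where $D\det(X)[Z,Z] = 2\det Z$... no — $\det$ is quadratic so $\det(X_j-Y_j)/|X_j-Y_j|^2 = \det Z_j \to \det Z$. Thus in the limit
\begin{equation*}
C_1|X|^2\,|DB(X|0)[Z]|^2 + \det Z \le 0.
\end{equation*}
Now I use the Monge--Amp\`ere connection alluded to in the introduction: for $X$ in the quasiconformal cone, $DB(X|0)[Z] = 0$ forces $Z$ to be a rank-one matrix of a specific form whose determinant is $\ge 0$ — or more robustly, one shows the quadratic form $Z\mapsto C_1|X|^2|DB(X|0)[Z]|^2 + \det Z$ is positive definite on all of $\R^{2\times 2}$ for $C_1 = C_1(K)$ large, uniformly for $X\in\mathcal C_K$ with $|X|\ge c$. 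This last claim is a finite-dimensional, scaling-invariant (homogeneity-degree-zero in $X$) linear-algebra fact: on the unit sphere of $Z$'s, either $\det Z \ge 0$ (done), or $\det Z < 0$ and then $Z$ is ``far from rank one'' so $DB(X|0)[Z]$ is bounded below — because $\ker DB(X|0)$ consists of matrices of the form (derivative of the $O(2)$-action plus radial), which are either rank one or have nonnegative determinant. Choosing $C_1$ large relative to $\sup_{\mathcal C_K\cap\{|X|\ge c\}} (\text{stuff})$ closes the contradiction. Combined with the reduction from the first paragraph (which only changes $K,\eps_3$ and the constant, via $S$), this establishes \eqref{sptnull} in general.

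I expect the genuinely delicate point to be the computation of $\ker DB(X|0)$ and the verification that on this kernel the null-Lagrangian $\det Z$ is nonnegative — this is precisely where ``inner criticality is an elliptic condition on quasiconformal matrices'' is proven, and it is presumably done via the Monge--Amp\`ere reformulation: $B(X|0)$ is (up to normalization) the cofactor/inverse-type object whose divergence-free-ness is the Monge--Amp\`ere equation for the potential of the quasiconformal map, so its linearization inherits the ellipticity of Monge--Amp\`ere on convex functions, which is where the sign of $\det Z$ on the kernel comes from.
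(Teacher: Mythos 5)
Your reduction to $M=0$ via Lemma~\ref{lem:LA}\eqref{BsimpS} matches the paper's Step~1. But the compactness argument that follows has a fatal flaw: the inequality \eqref{sptnull} is \emph{not} scaling invariant, contrary to what you assert. The map $Z\mapsto B(Z|0)$ is not $0$-homogeneous — from \eqref{eq:Bexp}, $\A(\lambda Z)=\sqrt{1+\lambda^2|Z|^2+\lambda^4\det(Z^TZ)}$ and the numerator entries are $1+\lambda^2|Z^i|^2$, so $B(\lambda Z|0)\neq B(Z|0)$; it is only \emph{asymptotically} $0$-homogeneous as $\lambda\to\infty$. More decisively, the hypothesis $\det X,\det Y\geq\eps_3$ is not scaling invariant either: normalizing $\max\{|X_j|,|Y_j|\}=1$ along a sequence with $\max\{|X_j|,|Y_j|\}\to\infty$ produces matrices whose determinants tend to $0$, so the normalized sequence escapes the hypothesis. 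And the conclusion genuinely degenerates as $\eps_3\to0$ with $K$ fixed: take $X=\mathrm{diag}(\eps,\eps)$, $Y=\mathrm{diag}(\eps/\kappa,\kappa\eps)$ for $\kappa>1$, both quasiconformal; then $\det(X-Y)<0$ while the $B$-term is $O(\eps^4)$, so \eqref{sptnull} fails if $\delta$ is held fixed as $\eps\to0$. Thus the regime $|X|,|Y|\to\infty$ cannot be normalized away, and it is exactly the regime your argument does not control.

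This is precisely where the paper's proof differs. The paper never uses compactness for the full estimate; it only uses a blow-up argument in Step~2 to dispose of the sub-case $|Y|\le c_0|X|$ (where only $\det(X-Y)$ and $|X-Y|^2$ appear, which \emph{are} $2$-homogeneous, and the scale-invariant hypothesis $|X|^2\le K\det X$ suffices). In the remaining, comparable-norm regime the proof is an explicit computation: after reducing to $X$ diagonal (SVD and Lemma~\ref{lem:LA}\eqref{orthr}--\eqref{orth}) and splitting $Y$ into a piece $Y_o$ with orthogonal columns plus a small remainder $Y_e$, the key cancellation is the decay estimate $|DB(X|Y)|\lesssim(1+|X|)^{-1}$ of Lemma~\ref{lem:LA}\eqref{gradBdd}, which exactly compensates the prefactor $\min\{|X|^2,|Y|^2\}$ as $|X|\to\infty$. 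Your sketch omits this decay entirely. Finally, the closing step of your plan — compute $\ker DB(X|0)$, show $\det Z\ge0$ on it, choose $C_1$ large — is stated as a hope (``I expect the genuinely delicate point to be\dots'') rather than proved, and would in any case be a pointwise-linearized statement that, without a uniform modulus in $X$ over the noncompact admissible set, does not upgrade to the finite-difference inequality \eqref{sptnull}. As written, the proposal does not constitute a proof.
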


Since the proof of this Proposition is rather lengthy, we postpone it to the end of the section.

\begin{Teo}\label{teo:regin}
Let $w \in \Lip(B_1,\R^2)$ be quasiconformal, $\det(Dw)\ge c > 0$ a.e. in $B_1$, $\psi \in C^\infty(B_1,\R^{n-2})$ and
\begin{equation}\label{eq:in2}
\dv(B(Dw|D\psi)) = 0 \text{ in the sense of distributions on $B_1$},
\end{equation}
then $w \in C^{1,\alpha}(B_{1/2},\R^2)$.
\end{Teo}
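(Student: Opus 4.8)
### Proof proposal for Theorem \ref{teo:regin}

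The plan is to run a Campanato-type iteration on the gradient $Dw$, using Proposition \ref{prop:regdiffinc} as the replacement for a monotonicity/ellipticity inequality. First I would record the structural facts available from the hypotheses: since $w$ is quasiconformal and Lipschitz, $|Dw|^2 \le K \det(Dw)$ a.e.\ for some $K$, and $\det(Dw) \ge c > 0$, so $Dw$ takes values in the set of matrices $X$ with $|X|^2 \le K\det X$ and $\det X \ge c$; moreover $|Dw| \ge c_0 > 0$ a.e.\ (the lower bound on the determinant forces a lower bound on the norm). Since $\psi \in C^\infty$, writing $M \doteq D\psi$ we have $M \in C^\infty$ with $|M| \le L$ on $B_{3/4}$, and $M$ is as regular as we like. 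By Lemma \ref{lem:LA}\eqref{Bdd} the coefficient $B(Dw|D\psi)$ is bounded, so \eqref{eq:in2} is a genuine divergence-form equation with $L^\infty$ data and bounded ``solution gradient''.

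The core is a decay estimate for the $L^2$-oscillation of $Dw$ on balls. Fix a ball $B_\rho(x_0) \subset B_{3/4}$ and let $A \doteq \avint_{B_\rho(x_0)} Dw\,dx$; by the constraints above, after shrinking to a scale on which $D\psi$ is nearly constant, $A$ essentially satisfies the same quasiconformality and nondegeneracy bounds. Testing \eqref{eq:in2} against $\eta = (w - Ax)\zeta^2$ for a cutoff $\zeta$, and using that the constant coefficient map $x \mapsto B(A|M(x_0))$ has divergence zero, one gets after the usual manipulations a Caccioppoli-type inequality controlling $\int \langle B(Dw|M) - B(A|M(x_0)), Dw - A\rangle \zeta^2$ by the oscillation of $w - Ax$ on the annulus, plus an error from freezing $M$ at $x_0$ which is $O(\rho)$-small by smoothness of $\psi$. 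Here Proposition \ref{prop:regdiffinc}, applied with $X = Dw(x)$, $Y = A$, $\eps_3 = c$, gives the pointwise bound
\[
\delta|Dw - A|^2 \le C_1 \min\{|Dw|^2,|A|^2\}\,|B(Dw|M) - B(A|M)|^2 + \det(Dw - A),
\]
and the point is that $\int_{B_\rho} \det(Dw - A)\,dx$ is a null Lagrangian: it equals a boundary term $\int_{\partial B_\rho} (\text{stuff})$ controlled purely by the oscillation of $w - Ax$, with no derivative of $w$ appearing to top order. Combining the null-Lagrangian identity with the Caccioppoli inequality and the elementary Poincaré-type estimate $\int_{B_\rho}|w - Ax - b|^2 \lesssim \rho^2 \int_{B_{2\rho}}|Dw - A|^2$, one obtains a reverse-Hölder / hole-filling inequality of the form
\[
\int_{B_{\rho/2}} |Dw - A_{\rho/2}|^2 \le \theta \int_{B_{2\rho}}|Dw - A_{2\rho}|^2 + C\rho^{2+2\beta}
\]
with $\theta < 1$, from which Campanato's iteration lemma yields $Dw \in C^{0,\alpha}$ locally, hence $w \in C^{1,\alpha}(B_{1/2},\R^2)$.

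The main obstacle is making the freezing argument genuinely quantitative: one must check that the matrix $B(A|M)$ actually ``sees'' the equation the way a constant-coefficient elliptic operator would, i.e.\ that replacing $B(Dw|D\psi(x))$ by $B(A|D\psi(x_0))$ produces only controllable errors. This requires (a) the Lipschitz bound on $B$ near the relevant matrix set — exactly Lemma \ref{lem:LA}\eqref{gradBdd} — to handle the dependence on the $X$ slot, and (b) smoothness of $\psi$ to handle the $M$ slot; both errors must be absorbed using that $\min\{|Dw|^2,|A|^2\}$ is comparable to $|Dw|^2$ (which needs the lower bound $|Dw| \ge c_0$, and this is where $\det(Dw) \ge c$ is essential, since the factor $\min\{|X|^2,|Y|^2\}$ in \eqref{sptnull} would otherwise degenerate). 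A secondary technical point is justifying the integration-by-parts identity for $\int \det(Dw - A)$ for a merely $W^{1,2}$ map and checking the test function $\eta$ is admissible in \eqref{eq:in2}; this is routine since $w$ is Lipschitz, and is close in spirit to \cite[Theorem 6.5]{RT} and \cite[Theorem 3]{SAP}.
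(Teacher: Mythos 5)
There is a genuine gap in your proposal, and it is precisely the step the paper handles with its Monge--Amp\`ere observation. After applying Proposition \ref{prop:regdiffinc} pointwise with $X = Dw(x)$, $Y = A$, you must control the term $\int \zeta^2\min\{|Dw|^2,|A|^2\}\,|B(Dw|M)-B(A|M)|^2\,dx$, because only then does the null-Lagrangian bound on $\int\zeta^2\det(Dw-A)$ give a Caccioppoli estimate with a gain. Your Caccioppoli step, testing \eqref{eq:in2} against $\eta=(w-Ax)\zeta^2$ and subtracting the divergence-free constant field $B(A|M_0)$, produces control on the \emph{cross term} $\int\zeta^2\langle B(Dw|M)-B(A|M_0),\,Dw-A\rangle\,dx$, which is a signed quantity with no definite relation to the squared $B$-oscillation: there is no monotonicity of $X\mapsto B(X|M)$ that would let you pass from the cross term to $\int\zeta^2|B(Dw|M)-B(A|M)|^2$. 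Worse, Lemma \ref{lem:LA}\eqref{gradBdd} gives $|B(Dw|M)-B(A|M)|\lesssim |Dw-A|/(1+\min\{|Dw|,|A|\})$, so $\min\{|Dw|^2,|A|^2\}\,|B(Dw|M)-B(A|M)|^2$ is generically \emph{comparable to} $|Dw-A|^2$, not smaller; plugging this back into \eqref{sptnull} yields a tautology and no decay. In other words, Proposition \ref{prop:regdiffinc} is only useful once you already know that the $B$-field oscillates much less than $Dw$ does, and a freezing/Campanato scheme by itself does not supply that information.

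The paper closes this gap before any iteration begins: from \eqref{eq:in2}, $B(Dw|D\psi)$ is a symmetric, divergence-free, $L^\infty$ matrix field with positive diagonal and unit determinant (Lemma \ref{lem:LA}\eqref{signs}--\eqref{Bdd}), hence $B(Dw|D\psi)=\cof(D^2 z)$ for a convex $W^{2,\infty}$ solution $z$ of $\det(D^2 z)=1$, and the interior regularity theory for the Monge--Amp\`ere equation makes $z$, and therefore $x\mapsto B(Dw|D\psi)(x)$, smooth. This is the input that produces the crucial estimate \eqref{nl-1}: the finite-difference $|B(Dw(\cdot+h)|D\psi)-B(Dw|D\psi)|$ is pointwise $O(|h|)$ \emph{even though} $Dw$ itself has no modulus of continuity, and only then does \eqref{sptnull} applied to $X=Dw(x+h)$, $Y=Dw(x)$ yield $\int\eta^2|Dw^h|^2\lesssim 1$, i.e.\ $w\in W^{2,2}_{\loc}$, with Gehring's lemma upgrading to $W^{2,2+\eps}_{\loc}\hookrightarrow C^{1,\alpha}$. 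I would suggest you either incorporate the Monge--Amp\`ere smoothness of the $B$-field at the outset (after which a Campanato iteration could plausibly be made to run, choosing $A$ so that $B(A|M_0)$ matches the average of the now-smooth field $B(Dw|D\psi)$), or switch to the paper's difference-quotient route. Your remaining structural observations (the role of $\det(Dw)\ge c$ in keeping $\min\{|X|^2,|Y|^2\}$ nondegenerate, the admissibility of the test functions, the null-Lagrangian identity for $\det$) are all correct and match the paper's usage.
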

\begin{proof}
 First we need to recall some properties of the matrix-valued map $x\mapsto B(Dw|D\psi)(x)$, see \cite[Section 3]{RT} for further details. By Lemma \ref{lem:LA}\eqref{signs}-\eqref{Bdd}, \eqref{eq:in2} implies that
\[
B(Dw|D\psi) = \cof(D^2z),
\]
where $z: B_1 \to \R$ is a convex, $W^{2,\infty}$ solution to the Monge-Amp\`ere equation
\[
\det(D^2z) = 1.
\]
Therefore, $z$ is smooth (see e.g. \cite[Sections 2-3]{FIG} and the references therein), and hence so is $x\mapsto B(Dw|D\psi)(x)$. Since $\det(Dw) \ge c$ a.e., $w$ is quasiconformal and $D\psi$ is smooth, hence bounded, we see that for a.e. $x,y \in B_1$, $X = Dw(x)$, $Y = Dw(y)$, $M = D\psi(x)$ fulfill the assumptions of Proposition \ref{prop:regdiffinc}. This allows us to start the proof of the $C^{1,\alpha}$ regularity of $w$ in $B_{1/2}$.
\\
\\
We consider any $h \in \R^2$ such that $0 < |h| < \frac{1}{4}$ and denote, for any function $g$, $g^h(x) \doteq \frac{g(x+ h) - g(x)}{|h|}$. We take any $\eta \in C^\infty_c(B_{3/4})$, $\eta(x) \ge 0\; \forall x \in B_1$, and finally set $m_h(x) \doteq \min\{|Dw|(x),|Dw|(x + h)\}$. We start by noticing that
\begin{equation}\label{nl-1}
\begin{split}
\int_{B_1}\eta^2m_h(x)^2|B(Dw(x + h)|&D\psi(x)) - B(Dw(x)|D\psi(x))|^2dx \\
&\le 2\int_{B_1}\eta^2m_h(x)^2|B(Dw(x + h)|D\psi(x + h)) - B(Dw(x)|D\psi(x))|^2dx  \\
&\qquad+ 2 \int_{B_1}\eta^2m_h(x)^2|B(Dw(x + h)|D\psi(x + h)) - B(Dw(x + h)|D\psi(x))|^2dx\\
&\le C|h|^2\int_{B_1}\eta^2m_h(x)^2dx \\
&\qquad+ C\int_{B_1}\eta^2m_h(x)^2|B(Dw(x + h)|D\psi(x + h)) - B(Dw(x+h)|D\psi(x))|^2dx\\
&\le C|h|^2\int_{B_1}\eta^2m_h(x)^2dx + C\int_{B_1}\eta^2\frac{m_h(x)^2}{1 + |Dw|(x + h)}|D\psi(x + h) - D\psi(x)|^2dx\\
&\le C|h|^2\int_{B_1}\eta^2m_h(x)^2dx  \le C|h|^2 \int_{B_1}|Dw|^2(x)dx.
\end{split}
\end{equation}
In this chain of inequalities and in the rest of the proof, $C>0$ is a constant that may vary line-by-line. In passing from the first to the second inequality we used the smoothness of $x \mapsto B(Dw|D\psi)(x)$, while in passing from the second to the third we employed Lemma \ref{lem:LA}\eqref{gradBdd}. We can finally start the main part of the proof.
\\
\\
Let $b \in \R^2$ be any constant. Since the determinant is a Null Lagrangian, see \cite[Theorem 2.3(ii)]{DMU}, then:
\begin{equation}\label{nl0}
\begin{split}
0 &=\int_{B_1}\det(D (\eta (w^h - b)))(x)dx \\
&= \int_{B_1}\eta^2\det(D w^h(x))dx + \int_{B_1} \langle\cof^T((w^h - b)\otimes D\eta), \eta Dw^h\rangle dx.
\end{split}
\end{equation}
Hence
\begin{equation}\label{nulllag}
\begin{split}
\int_{B_1} |w^h - b||D\eta||\eta|| Dw^h|dx &\overset{\eqref{nl0}}{\ge} \int_{B_1}\eta^2\det(D w^h(x))dx \\
&\overset{\eqref{sptnull}}{\ge} \delta\int_{B_1}\eta^2|Dw^h|^2dx \\
&\qquad- \frac{C}{|h|^2}\int_{B_1}\eta^2m_h(x)^2|B(Dw(x + h)|D\psi(x)) - B(Dw(x)|D\psi(x))|^2dx\\
&\overset{\eqref{nl-1}}{\ge} \delta\int_{B_1}\eta^2|Dw^h|^2dx - C\int_{B_1}\eta^2|Dw|^2dx.
\end{split}
\end{equation}
Inequality \eqref{nulllag} implies, after using Young's inequality:
\begin{equation}\label{basicin1}
\int_{B_1}\eta^2|Dw^h|^2dx \le C\int_{B_1}\eta^2|Dw|^2dx + C\int_{B_1}|w^h - b|^2|D\eta|^2dx.
\end{equation}
Choose first $b = 0$. Then, we can invoke \cite[Proposition 9.3]{BRE} to infer $u \in W_{\loc}^{2,2}(B_{3/4})$. We will now show $u \in W^{2,2 +\varepsilon}_{\loc}(B_{3/4})$ for some $\eps > 0$. 
Take any square $Q$ of side $r$ such that $2Q \subset B_{3/4}$. Here, $\lambda Q$ is the square centered at the center of $Q$ and with side $\lambda r$, for any $\lambda > 0$. Suppose $\eta \in C_c^\infty(2Q)$, $\eta \equiv 1$ on $Q$, and $|D\eta(x)| \le Cr^{-1}, \forall x \in 2Q$. Combining this choice with the Poincarè-Sobolev inequality on cubes, we obtain from \eqref{basicin1} and a suitable choice of $b$:
\[
\int_{Q}|Dw^h|^2dx \le C\int_{2Q}|Dw|^2dx + \frac{C}{r^2}\left(\int_{2Q}|Dw^h|dx\right)^2.
\]
Letting $|h| \to 0$, we finally get
\begin{equation*}\label{basicin}
\int_{Q}|D^2w|^2dx \le C\int_{2Q}|Dw|^2dx + \frac{C}{r^2}\left(\int_{2Q}|D^2w|dx\right)^2.
\end{equation*}
Now Gehring's Lemma as stated for instance in \cite[Proposition 5.1]{MGR} concludes the proof of the higher integrability of $D^2w$. Thus, $Dw \in W_{\loc}^{1,2+\eps}(B_{3/4},\R^{2\times 2})$ and hence belongs to $C^\alpha(B_{1/2})$, as wanted.
\end{proof}

\begin{Rem}
Suppose that $n =2$, so that \eqref{eq:in2} reduces to the more classical inner variations equations $\dv(B(Dw)) = 0.$ Theorem \ref{teo:regin} states that quasiconformal solutions to the inner variations equations is regular\footnote{In the statement of Theorem \ref{teo:regin}, we have the additional assumption $\det(Dw) \ge c > 0$, which is a natural feature of our problem, see \eqref{detDwbdd}. However with a slightly different proof one can show that, at least for the case $n= 2$, this assumption is not needed, and hence one can compare more precisely our result with the one of \cite{HEL}.}. This result can be compared with the analogous results for the Dirichlet energy of F. H\'elein \cite{HEL} and T. Iwaniec, L. V. Kovalev, J. Onninen \cite[Theorem 1.3]{IKO}. The first cited result is the most directly related, since there the map is assumed to be quasiconformal and critical with respect to inner variations of the Dirichlet energy, while the second shows the same assuming the weaker condition that the map is continuous, open and discrete. It is also interesting to notice that our proof is substantially different from both the one of \cite{HEL} and the one of \cite{IKO}.
\end{Rem}

We conclude this section by proving Proposition \ref{prop:regdiffinc}.

\begin{proof}[Proof of Proposition \ref{prop:regdiffinc}]
Let us divide the proof into steps. The expression $A \lesssim B$ means $A \le CB$, for some $C = C(K,\eps_3,L) > 0$. The notation $A \sim B$ means $A \lesssim B$ and $B \lesssim A$. When it appears explicitly, the constant $C$ will always depend solely on $K, \eps_3$ and $L$ and can change line-by-line.
\\
\\
\fbox{Step 1: Reduction to $M = 0$.} We start by remarking that it is enough to show \eqref{sptnull} in the case $M = 0$. Indeed, assume for a moment that we know the statement of the Proposition for every $\eps_3,K,X,Y$ and $M = 0$. Then, consider
\[
S = \sqrt{\id + M^TM}.
\]
In the sense of quadratic forms
\begin{equation}\label{boundS}
\id \le S \le C\id.
\end{equation}
We employ Lemma \ref{lem:LA}\eqref{BsimpS} to write
\begin{equation}\label{BY0}
|B(X|M) - B(Y|M)|^2 = \det(S)^2|S^{-1}(B(XS^{-1}|0) - B(YS^{-1}|0))S^{-1}|^2\overset{\eqref{boundS}}{\sim} |B(XS^{-1}|0) - B(YS^{-1}|0)|^2.
\end{equation}
Notice now that $XS^{-1},YS^{-1}$ fulfill \eqref{XYqc}-\eqref{eps3} with new constants $K'$ and $\eps_4$ only depending on the previous constants $K,\eps_3,L$. Using our assumption on the fact that the present proposition holds in the case $M = 0$, we can estimate
\[
C_1\min\{|XS^{-1}|^2,|YS^{-1}|^2\}|B(S^{-1}X|0)-B(YS^{-1}|0)|^2 + \det(XS^{-1}-YS^{-1}) \ge \delta|XS^{-1}-YS^{-1}|^2
\]
Using \eqref{boundS} and \eqref{BY0}, the general estimate \eqref{sptnull} follows. In the rest of the proof, we will assume $M=0$.
\\
\\
\fbox{Step 2: Simplifications in the case $M = 0$.} It suffices to show that, for all $X,Y$ fulfilling \eqref{XYqc}-\eqref{eps3}, there exists $\eps = \eps(K,\eps_3)>0$ such that, if
\begin{equation}\label{eq:assu}
|B(X|0)-B(Y|0)|^2\min\{|X|,|Y|\}^2\le \eps|X-Y|^2,
\end{equation}
then we can find $0<\delta <1$ as in the statement of this proposition such that
\begin{equation}\label{eq:detXY}
\det(X-Y)\ge \delta |X-Y|^2.
\end{equation}
If we show this statement, then \eqref{sptnull} follows choosing $C_1 \doteq 2/\eps$. Indeed, either \eqref{eq:assu} holds, and hence \eqref{eq:detXY} directly implies \eqref{sptnull}, or \eqref{eq:assu} does not hold, in which case
\[
C_1|B(X|0)-B(Y|0)|^2\min\{|X|,|Y|\}^2 + \det(X-Y) \ge C_1\eps|X-Y|^2 -|X-Y|^2 = |X-Y|^2\ge \delta|X-Y|^2.
\]
We can make a few simplifications before showing how \eqref{eq:assu} implies \eqref{eq:detXY}. 
\\
\\
First, we can always assume
\begin{equation}\label{YlessX}
|Y| \le |X|.
\end{equation}
Furthermore, we can always assume that there exists $c_0 = c_0(K) >0$ such that
\begin{equation}\label{c0XY}
c_0|X| \le |Y|.
\end{equation}
Indeed, there exists $c_0= c_0(K)$, $0<c_0<1$, such that for any couple of matrices $X,Y$ fulfilling \eqref{XYqc}-\eqref{YlessX}, the inequality
\begin{equation}\label{YcontroX}
|Y| \le c_0|X|,
\end{equation}
implies \eqref{eq:detXY} for any $\delta \le \frac{1}{2K}$. Indeed, suppose this is false, so that we can find sequences $X_n$ and $Y_n$ with $|X_n| \neq 0, \forall n$,
\begin{equation}\label{assu22}
|Y_n| \le \frac{1}{n}|X_n|,\quad \text{ $X_n,Y_n$ fulfill \eqref{XYqc},\quad and $\frac{X_n}{|X_n|} \to X$},
\end{equation}
but
\begin{equation}\label{contra2}
\det(X_n-Y_n) \le \frac{1}{2K}|X_n-Y_n|^2.
\end{equation}
Then, clearly $|X| = 1$ and $X$ fulfills \eqref{XYqc} as well. Dividing by $|X_n|^2$ in \eqref{contra2} and passing to the limit as $n\to \infty$, we can exploit \eqref{assu22} to get a contradiction with \eqref{XYqc}.
\\
\\
The second simplification we can perform is to exploit the principal value decomposition and Lemma \ref{lem:LA}\eqref{orthr}-\eqref{orth} to reduce us to study only the case in which $X$ is diagonal with positive entries:
\begin{equation}\label{X:diag}
X = \left(\begin{array}{cc} a & 0\\0&b\end{array}\right), \quad a,\, b > 0.
\end{equation}
We can start with the main part of the proof. Recall that we assume \eqref{XYqc}-\eqref{eps3}-\eqref{YlessX}-\eqref{c0XY}, and $X$ has the special form $\eqref{X:diag}$. Under this set of assumptions, we need to show that \eqref{eq:assu} implies \eqref{eq:detXY}. We will first deal with the case in which $B(Y|0)$ is as well diagonal, which in view of \eqref{eq:Bexp} is equivalent to assuming that the columns of $Y$ are orthogonal. We will deal with the general case afterwards.
\\
\\
\fbox{Step 3: $B(Y|0)$ diagonal.} Write $Y = (Y^1|Y^2)$. As $Y^1$ and $Y^2$ are orthogonal, we can find $v \in \mathbb{S}^1$ and $\alpha,\beta$ such that
\begin{equation}\label{Yorto}
Y = (\alpha v | \beta Jv).
\end{equation}
Here, we defined the symplectic matrix
\[
J\doteq
\left(
\begin{array}{cc}
0 & -1\\
1 & 0
\end{array}
\right).
\]
Up to choosing $-v$, we can assume $\alpha, \beta > 0$. Notice that \eqref{XYqc}-\eqref{YlessX}-\eqref{c0XY} imply
\begin{equation}\label{allcomp}
|X|\sim |Y| \sim \alpha \sim \beta \sim a \sim b.
\end{equation}
If $B$ is diagonal, \eqref{eq:assu} carries only one non-trivial information:
\begin{equation}\label{alphabeta}
\left|\frac{\sqrt{1 + \beta^2 }}{\sqrt{1 + \alpha ^2}} - \frac{\sqrt{1 + b^2}}{\sqrt{1 + a ^2}}\right|^2|Y|^2 \le \eps |X-Y|^2,
\end{equation}
as can be seen exploiting \eqref{eq:Bexp}. We wish to show that \eqref{alphabeta} yields
\begin{equation}\label{ts1}
|Y|^{-2}\left|\frac{1 + a^2}{1 +b^2}(\beta^2 - b^2) - (\alpha^2 - a^2)\right|^2 \lesssim \eps |X-Y|^2.
\end{equation}
We have:
\begin{equation}\label{ts2}
\begin{split}
\left|\frac{\sqrt{1 +\beta^2}}{\sqrt{1 + \alpha ^2}} - \frac{\sqrt{1 + b^2}}{\sqrt{1 + a ^2}}\right||Y| &= \left|\frac{\sqrt{1 +\beta^2}\sqrt{1 + a^2} - \sqrt{1 + b^2}\sqrt{1 + \alpha^2}}{\sqrt{1 + \alpha ^2}\sqrt{1 + a^2}}\right||Y|\\
&\overset{\eqref{eps3}-\eqref{allcomp}}{\ge} C|Y|^{-1} \left|\sqrt{1 +\beta^2}\sqrt{1 + a^2} - \sqrt{1 + b^2}\sqrt{1 + \alpha^2}\right|\\
& \ge C|Y|^{-3}\left|(1 +\beta^2)(1 + a^2) - (1 + b^2)(1 + \alpha^2)\right|\\
& = C|Y|^{-3}(1 + b^2)\left|\frac{1 + a^2}{1 + b^2}(\beta^2 - b^2) - (\alpha^2 - a^2)\right| \\
&\overset{\eqref{allcomp}}{\ge} C|Y|^{-1}\left|\frac{1 + a^2}{1 + b^2}(\beta^2 - b^2) - (\alpha^2 - a^2)\right|
\end{split}
\end{equation}
\eqref{alphabeta}-\eqref{ts2} imply \eqref{ts1}. We are finally in position to prove \eqref{eq:detXY}. Let $v_1$ be the first component of $v$ and write
\begin{equation}\label{XYnorm}
\begin{split}
|X-Y|^2 &= \alpha^2 + a^2 -2v_1a\alpha + \beta^2 + b^2 - 2v_1b\beta = (\alpha - a)^2 + (\beta - b)^2 + 2(1-v_1)(a\alpha + b\beta)\\
& \ge (a-\alpha)^2 + (b-\beta)^2,
\end{split}
\end{equation}
where in the last inequality we used the fact that $v_1 \le 1$ and $a\alpha,b\beta > 0$. Moreover,
\begin{equation}\label{XYdetdet}
\begin{split}
\det(X-Y) &= (ae_1 - \alpha v, be_1 - \beta v) = ab + \alpha \beta - v_1(\alpha b +a\beta) = (a - \alpha)(b-\beta) +(1-v_1)(\alpha b +a\beta)
\end{split}
\end{equation}
By \eqref{allcomp}, we have
\begin{equation}\label{easyterm}
(a\alpha + b\beta) \lesssim (\alpha b +a\beta).
\end{equation}
Furthermore, first multiplying and diving by $a + \alpha \neq 0$ by \eqref{eps3} and then summing and subtracting $\frac{1 + a^2}{1 + b^2}(\beta^2-b^2)$:
\begin{equation}\label{diffterm0}
\begin{split}
(a-\alpha)(b-\beta) = \frac{b-\beta}{a+\alpha}(a^2-\alpha^2) =  \frac{\beta-b}{a+\alpha}\frac{1 + a^2}{1 + b^2}(\beta^2 - b^2) +  \frac{b-\beta}{a+\alpha}\left(a^2-\alpha^2 + \frac{1 + a^2}{1 + b^2}(\beta^2 - b^2)\right).
\end{split}
\end{equation}
By \eqref{allcomp} we see that
\begin{equation}\label{firstaddiff}
\frac{\beta-b}{a+\alpha}\frac{1 + a^2}{1 + b^2}(\beta^2 - b^2) = (b-\beta)^2\frac{b+ \beta}{a+\alpha}\frac{1 + a^2}{1 + b^2} \ge C(b-\beta)^2.
\end{equation}
The second addendum of \eqref{diffterm0} is estimated as:
\begin{equation}\label{secondaddiff}
\left|\frac{b-\beta}{a+\alpha}\left(a^2-\alpha^2 + \frac{1 + a^2}{1 + b^2}(\beta^2 - b^2)\right)\right| \overset{\eqref{eps3}-\eqref{allcomp}-\eqref{ts1}-\eqref{XYnorm}}{\lesssim} \sqrt{\eps}|X-Y|^2
\end{equation}
A symmetric reasoning yields as well
\begin{equation}\label{a-alpha}
(a-\alpha)(b-\beta) \ge C(a-\alpha)^2 - C\sqrt{\eps}|X-Y|^2.
\end{equation}
Combining \eqref{easyterm}-\eqref{diffterm0}-\eqref{firstaddiff}-\eqref{secondaddiff}, we obtain
\[
|X-Y|^2 \le C\det(X-Y) + C\sqrt{\eps}|X-Y|^2.
\]
By choosing $\eps$ sufficiently small we finally get \eqref{eq:detXY} if the columns of $Y$ are orthogonal. We now move to the general case.
\\
\\
\fbox{Step 4: The general case.} Let $Y =(Y^1|Y^2) \in \R^{2\times 2}$, and consider the following splitting
\[
Y = Y_{e} + Y_o \doteq \left(0\left|\frac{(Y^1,Y^2)}{|Y^1|^2}Y^1\right.\right) + \left(Y^1\left|\frac{(JY^1,Y^2)}{|Y^1|^2}JY^1\right.\right) = \left(0\left|\frac{(Y^1,Y^2)}{|Y^1|^2}Y^1\right.\right) + \left(Y^1\left|\frac{\det(Y)}{|Y^1|^2}JY^1\right.\right).
\]
Considering the off-diagonal elements of $B(Y|0)$ and using the explicit expression \eqref{eq:Bexp} and \eqref{eq:assu}, we obtain:
\begin{equation}\label{scalprod}
\left|\frac{(Y^1,Y^2)}{\A(Y|0)}\right|^2|Y|^2 \le \eps|X-Y|^2.
\end{equation}
Since $Y$ fulfills \eqref{XYqc}-\eqref{eps3}, we see that
\begin{equation}\label{Aquad}
\A(Y|0) \sim |Y|^2.
\end{equation}
Thus, \eqref{scalprod} yields
\begin{equation}\label{scalprod1}
\left|(Y^1,Y^2)\right|^2|Y|^{-2} \lesssim \eps|X-Y|^2.
\end{equation}
Again since $Y$ fulfills $\eqref{XYqc}$, 
\begin{equation}\label{Yallcomp}
|Y^1| \sim |Y^2|\sim |Y|.
\end{equation}
Therefore, \eqref{scalprod1} yields
\begin{equation}\label{Yerr}
|Y_e|^2 \lesssim \eps|X-Y|^2.
\end{equation}
In turn, if $\eps$ is sufficiently small, \eqref{Yerr} implies
\begin{equation}\label{Yerro}
|Y_e|^2 \lesssim \eps|X-Y_o|^2.
\end{equation}
In particular, \eqref{eq:assu} and \eqref{Yerr} imply
\[
|B(X|0) - B(Y|0)|^2|Y|^2 \lesssim \eps|X-Y_o|^2
\]
Since $|Y_o|^2 \leq |Y|^2$ we conclude that
\begin{equation}\label{starting0}
|B(X|0) - B(Y|0)|^2|Y_o|^2 \lesssim \eps|X-Y_o|^2.
\end{equation}
Consider the segment
\[
\sigma(t) \doteq Y_o + tY_e,\quad \text{for }t\in[0,1].
\]
Then $\det(\sigma(t)) = \det(Y)$ for all $t$, as can be verified by direct computation. Furthermore, since $\langle Y_e, Y_o\rangle = 0$,
\begin{equation}\label{boundnormsig}
|Y_o| \le |\sigma(t)| \overset{\eqref{Yallcomp}}{\lesssim}|Y|,\quad \forall t \in [0,1].
\end{equation}
Therefore, $\sigma(t)$ fulfills
\begin{equation}\label{sigmat}
|\sigma(t)|^2 \lesssim |Y|^2 \overset{\eqref{XYqc}}{\lesssim}\det(Y) = \det(\sigma(t)), \quad \forall t \in[0,1].
\end{equation}
Now Lemma \ref{lem:LA}\eqref{gradBdd} implies
\begin{equation}\label{starting}
\begin{split}
|B(Y_o|0) - B(Y|0)|^2|Y_o|^2 = \left|\int_{0}^1DB(\sigma(t)|0)[\sigma'(t)]\right|^2dt|Y_o|^2 \lesssim  \left(\int_{0}^1\frac{|\sigma'(t)|}{1 + |\sigma(t)|}dt\right)^2|Y_o|^2 \\
\overset{\eqref{boundnormsig}-\eqref{sigmat}}{\lesssim} |Y_e|^2  \overset{\eqref{Yerro}}{\lesssim} \eps|X-Y_o|^2.
\end{split}
\end{equation}
Inequalities \eqref{starting}-\eqref{starting0} and the triangle inequality finally give us
\begin{equation}\label{starting1}
|B(X|0) - B(Y_o|0)|^2|Y_o|^2 \lesssim \eps|X-Y_o|^2.
\end{equation}
Using that $\det(Y_o) = \det(Y)$ and $|Y_o|^2 \leq |Y|^2$, we have that $Y_o$ fulfills \eqref{XYqc}-\eqref{eps3}. Thus, having chosen $\eps$ small, we can employ the previous step to deduce the existence of $\delta > 0$ such that
\begin{equation}\label{X-Y0}
\delta|X-Y_o|^2 \le \det(X-Y_o).
\end{equation}
Since $Y = Y_o + Y_e$ and $Y_e$ fulfills \eqref{Yerr}-\eqref{Yerro}, we can select $\eps$ even smaller and depending on $\delta$ to obtain \eqref{eq:detXY} from \eqref{X-Y0}. This concludes the proof of the general case, and hence of this Proposition.
\end{proof}

\appendix

\section{Proof of Lemma \ref{lem:LA}}\label{sec:PLA}

\begin{proof}
\eqref{orthr}-\eqref{orth} are immediate from \eqref{initial} and the same symmetries of the area integrand, and \eqref{DAorth} follows from these and \eqref{shortexpDA}. Also the properties in \eqref{signs} are clear from \eqref{initial}-\eqref{Basg}.
\\
\\
We now move to the proof of \eqref{xoxo}. We use \eqref{Basg} to write, for any $W \in \R^{2\times 2}$:
\begin{equation}\label{Binverse}
B^{-1}(W|Y) = \frac{\id + W^TW  + Y^TY}{\sqrt{\det(\id + W^TW + Y^TY)}}.
\end{equation}
Since
\[
\id + X^{-T}X^{-1}  + Y^TY =  X^{-T}(X^TX + \id + (YX)^TYX)X^{-1},
\]
we readily infer
\[
B^{-1}(X^{-1}|Y) = \det(X)X^{-T}B^{-1}(X|YX)X^{-1}.
\]
Taking the inverse on the left and on the right, we get
\[
B(X^{-1}|Y) = \det(X)^{-1}XB(X|YX)X^{T},
\]
as wanted.
\\
\\
To show \eqref{BsimpS}, we consider again \eqref{Binverse}. Since
\[
\id + X^TX + Y^TY = S^2 + X^TX = S(\id + (XS^{-1})^TXS^{-1})S,
\]
the result readily follows.
\\
\\
Finally, to show \eqref{Bdd}-\eqref{gradBdd}, we first notice that for $X,Y$ fulfilling 
\begin{equation}\label{XM}
|X|^2 \le K\det(X) \text{ and } |Y| \le L,
\end{equation}
we have, exploiting for instance the third form of $\A$ in \eqref{eq:expA},
\begin{equation}\label{areaboundqc}
c(1 + |X|^2)\le \A(X|Y) \le C(1 + |X|^2),
\end{equation}
for positive constants $c,C > 0$. In the following, $c$ and $C$ may vary line-to-line, but they are uniform constants depending solely on $n$, $L$ and $K$. Consider any polynomial of order at most two, $Q = Q(X,Y) : \R^{2\times 2}\times \R^{(n-2)\times 2} \to \R$ and set
\[
h(X,Y) \doteq \frac{Q(X,Y)}{\A(X|Y)}
\]
From the explicit expression \eqref{eq:Bexp} we see that it suffices to show that for all $X,Y$ fulfilling \eqref{XM}, $h$ can be estimated as
\[
|h|(X|Y) \le C \quad \text{ and }\quad |Dh|(X|Y) \le \frac{C}{1 + |X|}.
\]
However, the first inequality is obvious from \eqref{areaboundqc} and the fact that $Q$ is a polynomial of at most quadratic growth. Next, considering any derivative in a coordinate direction, say $\partial_i$, we have
\[
\partial_ih(X,Y) = \frac{\partial_iQ(X,Y)}{\A(X|Y)} - \frac{Q(X,Y)}{\A(X|Y)^2}\partial_i\A(X|Y).
\]
The required estimate then follows from the fact that $Q$ is at most quadratic, \eqref{XM}, and the estimate
\[
|\partial_i\A(X|Y)| \le C(1 + |X| + |Y|) \le C(1 + |X|),
\]
which can be found in \cite[Lemma 2.1(1)]{RT} or can be seen directly from \eqref{eq:expDA}.
\end{proof}

\bibliographystyle{plain}
\bibliography{InnerCrit}
\end{document}